\newtheorem{theorem}{Theorem}[section]
\newtheorem{proposition}[theorem]{Proposition}
\newtheorem{lemma}[theorem]{Lemma}
\newtheorem{corollary}[theorem]{Corollary}
\newtheorem{remark}[theorem]{Remark}
\newtheorem*{theorem1*}{Theorem 1}
\newtheorem*{theorem2*}{Theorem 2}
\newtheorem*{conjecture1*}{Conjecture 1}
\newtheorem*{conjecture2*}{Conjecture 2}
\begin{document}

\title{\large{\textbf{ODD DECOMPOSITIONS OF EULERIAN GRAPHS}}}

\author{
Edita M\'a\v cajov\' a and Martin \v{S}koviera\\[3mm]
Department of Computer Science\\
Faculty of Mathematics, Physics and Informatics\\
Comenius University\\
842 48 Bratislava, Slovakia\\[2mm]
{\small\tt macajova@dcs.fmph.uniba.sk}\\[-1mm]
{\small\tt skoviera@dcs.fmph.uniba.sk}}

\date{\today}

\maketitle

\begin{abstract}
We prove that an eulerian graph $G$ admits a decomposition into
$k$ closed trails of odd length if and only if and it contains
at least $k$ pairwise edge-disjoint odd circuits and $k\equiv
|E(G)|\pmod{2}$. We conjecture that a connected $2d$-regular
graph of odd order with $d\ge 1$ admits a decomposition into
$d$ odd closed trails sharing a common vertex and verify the
conjecture for $d\le 3$. The case $d=3$ is crucial for
determining the flow number of a signed eulerian graph which is
treated in a separate paper (arXiv:1408.1703v2). The proof of
our conjecture for $d=3$ is surprisingly difficult and calls
for the use of signed graphs as a convenient technical tool.

\bigskip\noindent
\textbf{Keywords:} Eulerian graph, graph decomposition, signed graph, nowhere-zero flow.
\end{abstract}

\section{Introduction}
Eulerian graphs constitute a fundamental class of graphs
extensively studied throughout the entire history of graph
theory. Among the many problems concerning eulerian graphs,
those related to decomposition into various types of subgraphs
belong to most typical in this area \cite{Fleischner1,
Fleischner2}. In fact, one of the main pillars of the eulerian
graph theory is the classical result of Veblen \cite{Veblen}
that the existence of a circuit decomposition in a connected
graph is equivalent for the graph to be eulerian.
Decompositions of eulerian graphs into circuits or trails of
restricted lengths, although natural to require, are generally
difficult to find and the known results are scarce.  In most
cases, the graphs known to have such decompositions are
complete multipartite graphs, thus having a very explicit
structure \cite{BC,BS,CH, HW, HK}.

In contrast, our paper focuses on the existence of closed trail
decompositions in general eulerian graphs, with the only
restriction that each member of the decomposition have an odd
number of edges. It is easy to see that a connected graph $G$
admitting a decomposition into $k$ odd closed trails must be
eulerian with $|E(G)|\equiv k\pmod{2}$ and has to contain at
least $k$ edge-disjoint odd circuits, one for each closed
trail. We show that this necessary condition is also sufficient
(Theorem~\ref{thm:k-odd}). In particular, every connected
$2d$-regular graph of odd order, with $d\ge 1$, admits a
decomposition into $d$ odd closed trails
(Corollary~\ref{cor:2k-reg}).

Our primary interest lies, however, in odd closed trail
decompositions where all members are required to share a common
vertex; we call such decompositions \textit{rooted}.
Equivalently, we ask whether an eulerian graph contains, for a
given positive integer $k$, an eulerian trail $T$ and a vertex
$v$ which divides $T$ into $k$ closed trails of odd length
based at~$v$. We show that for $k=2$ the answer is positive if
and only if the graph is non-bipartite and has an even number
of edges (Theorem~\ref{thm:2bly-odd}). As a consequence, the
following is true.

\begin{theorem1*}\label{thm:main1}
Every connected $4$-regular graph of odd order has a rooted
decomposition into two odd closed trails.
\end{theorem1*}

A substantial part of this paper is devoted to the problem of
decomposing an eulerian graph into three odd closed trails with
common origin. Our motivation comes from the area of
nowhere-zero flows on signed graphs. In \cite[Main
Theorem~(c)]{euler} we show that a signed eulerian graph admits
a nowhere-zero integer $3$-flow if and only if it has a rooted
decomposition into three closed trails with an odd number of
negative edges each (see also \cite[Theorem~2.4]{euler-short}).
After a series of natural reductions the proof amounts to
proving the following theorem which is the main result of the
present paper.

\begin{theorem2*}\label{thm:2}
Every connected $6$-regular graph of odd order has a rooted
decomposition into three odd closed trails.
\end{theorem2*}

The comparison of Theorem~\ref{thm:main1} and Theorem~2
suggests that the following might be true.

\begin{conjecture1*}\label{conj:d-flower}
Every connected $2d$-regular graph of odd order, with $d\ge 1$,
has a rooted decomposition into $d$ odd closed trails.
\end{conjecture1*}

\begin{figure}[htbp]
\centerline{\includegraphics[width=20em]{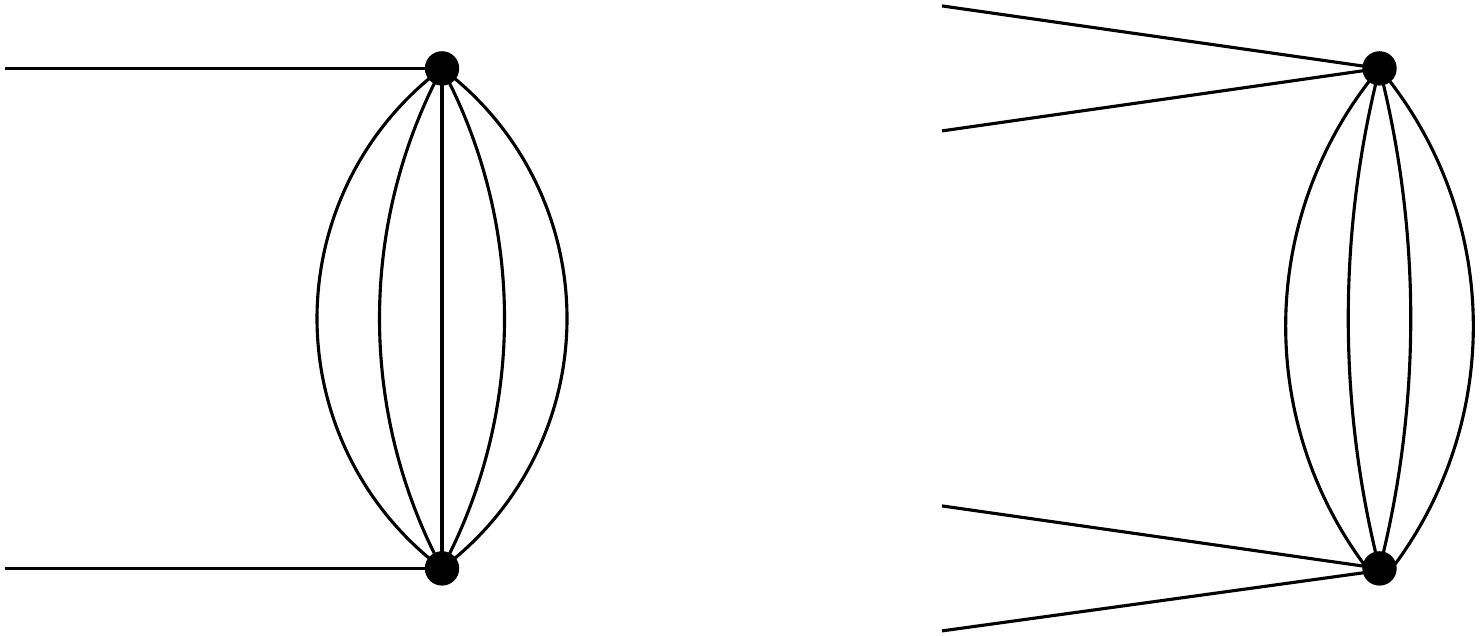}}
  \caption{None of these vertices can be a root of a decomposition into three odd trails}
  \label{fig:non-root}
\end{figure}

If a graph admits a rooted decomposition into $d$ odd closed
trails, then there is an obvious question about the
distribution of roots within the graph. Simple example show
that in general the root cannot be chosen arbitrarily even in
regular graphs. For example, none of the vertices depicted in
Figure~\ref{fig:non-root} can be a root of a decomposition into
three odd closed trails.

However, it seems plausible, that in a $4$-edge-connected
$4$-regular graph of odd order every vertex can be the root of
a decomposition into two odd closed trails. Similarly, it is
conceivable that for every vertex of a $6$-edge-connected
$6$-regular there exists a rooted decomposition into three odd
closed trails with root at that vertex. More generally, we
propose the following conjecture.

\begin{conjecture2*}
In a $2d$-edge-connected $2d$-regular graph of odd order, with
$d\ge 1$, every vertex is a root of some decomposition into $d$
odd closed trails.
\end{conjecture2*}

The rest of our paper is organised as follows. In the next
section we assemble the basic definitions needed in this paper.
In Section~\ref{sec:odddec} we begin the study of
decompositions into odd closed trails in general and derive a
necessary and sufficient condition for their existence. In
Section~\ref{sec:rooted} we introduce rooted decompositions and
prove Theorem~1. The proof of Theorem~2 is divided into two
parts which are contained in Sections~\ref{sec:proof-6-conn}
and~\ref{sec:proof-reduct}, respectively. In the final section
we briefly discuss a relationship between $3$-odd
decompositions and nowhere-zero integer $3$-flows
and  derive a necessary and sufficient
condition in for an eulerian graph to admit a rooted $3$-odd
decomposition.

\section{Preliminaries}\label{sec:prelim}

All graphs considered in this paper are finite and may have
multiple edges and loops. The graph consisting of a single
vertex and $d$ loops will be called the \textit{bouquet of $d$
circles} and will be denoted by $B_d$.

We often write $e=uv$ for an edge with end-vertices $u$ and
$v$, but this notation does not exclude the possibility that
$u=v$ or that there is another edge $f$ with the same
end-vertices as $e$.

An \textit{eulerian graph} has all vertices of even degree and
is always connected. A \textit{semi-eulerian graph} is a
connected graph with at most two vertices of odd degree. A
\textit{$u$-$v$-eulerian graph} is a connected graph where all
vertices except possibly $u$ and $v$ have an even degree; if
$u=v$, then the graph is eulerian.

Throughout this paper we make extensive use of various types of
walks, trails, paths, and their segments. In accordance with
the adopted terminology, a \textit{trail} is a non-empty
sequence $W=v_0e_1v_1e_2v_2\dots v_{k-1}e_kv_k$ whose terms are
alternately vertices and edges such that each edge $e_i$ joins
the vertex $v_{i-1}$ to the vertex $v_i$, and all edge terms
are distinct. More specifically, $W$ is a
\textit{$v_0$-$v_k$-trail}. If $v_0=v_k$, we say that the trail
is \textit{closed}. A \textit{path} is a trail in which all
vertex terms are distinct. A \textit{circuit} is a closed trail
in which all inner vertex terms are distinct. If $W_1$ is a
$u$-$v$-trail and $W_2$ is a $v$-$w$-trail, then $W_1W_2$
denotes the $u$-$w$-trail obtained by first traversing $W_1$
and then $W_2$. For subgraphs $H$ and $K$ of $G$ we define a
\textit{$K$-$H$-path} in $G$ as a $u$-$v$-path where the vertex
$u$ is in $K$, the vertex $v$ is in $H$, and all other vertices
lie outside $K\cup H$.

If $\{V_1,V_2\}$ is a partition of the vertex set of $G$, then
the set of all edges having an end-vertex in both partition
sets is called a \textit{cut} in $G$. A cut of size $n$ is an
\textit{$n$-edge-cut}. Recall that a graph $G$ is
\textit{$k$-edge-connected} if the removal of fewer than $k$
edges leaves $G$ connected. The \textit{edge-connectivity}
$\lambda(G)$ of $G$ is the largest integer $k$ for which $G$ is
$k$-edge-connected. In order to have the parameter $\lambda(G)$
always finite we make an exception from the definition when $G$
has a single vertex. In this case $G$ coincides with $B_d$ for
some $d\ge 0$, and we set $\lambda(B_d)=2d$. Note that if $G$
is eulerian, then $\lambda(G)$ is an even number. We will be
using this fact throughout the paper without mention.

Finally, if $G$ is a graph and $H$ and $K$ are subgraphs of
$G$, we let $H-K$ denote the subgraph of $H$ obtained by the
removal of the edges of $K$.

\section{Odd eulerian decompositions}\label{sec:odddec}

A \textit{decomposition} of a graph $G$ is a set
$\mathcal{D}=\{G_1,G_2,\ldots,G_k\}$ of subgraphs of $G$ whose
edge sets partition the edge set of $G$. The subgraphs $G_i$
constituting the decomposition will be called the
\textit{constituents} of $\mathcal{D}$. A decomposition
$\mathcal{D}$ will be called \textit{eulerian} if each
constituent is an eulerian subgraph; equivalently, if each constituent is an closed trail.
Clearly, a connected graph
that admits an eulerian decomposition must itself be eulerian.

The purpose of the present paper is to study eulerian
decompositions where each constituent $G_i$ has an odd number
of edges.  Such a decomposition will be called \textit{odd}.
More specifically, a decomposition
$\mathcal{D}=\{G_1,G_2,\ldots,G_k\}$ of an eulerian graph $G$
will be called \textit{$k$-odd} if each $G_i$ is eulerian and
has an odd number of edges.

Our first theorem characterises eulerian graphs that admit a
$k$-odd decomposition for a fixed integer $k\ge 1$.

\begin{theorem}\label{thm:k-odd}
Let $G$ be an eulerian graph and let $k$ be a positive integer
with $k\equiv |E(G)|\pmod{2}$. Then $G$ admits a $k$-odd
decomposition if and only if $G$ contains at least $k$ pairwise
edge-disjoint odd circuits.
\end{theorem}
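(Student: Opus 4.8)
The plan is to prove the two implications separately, with the forward (necessity) direction being essentially immediate and the backward (sufficiency) direction carrying all of the real content. For necessity, suppose $G$ has a $k$-odd decomposition $\{T_1,\dots,T_k\}$. Each $T_i$ is a closed trail and hence decomposes into edge-disjoint circuits; since $|E(T_i)|$ is odd, at least one of these circuits, say $C_i$, must have odd length. The circuits $C_1,\dots,C_k$ are pairwise edge-disjoint because the $T_i$ are, so $G$ contains $k$ pairwise edge-disjoint odd circuits. (The hypothesis $k\equiv|E(G)|\pmod 2$ is consistent with this, as a sum of $k$ odd numbers has the parity of $k$.)

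For sufficiency I would start from $k$ pairwise edge-disjoint odd circuits $C_1,\dots,C_k$ and put $H=C_1\cup\dots\cup C_k$ and $R=G-H$. Since $G$ is eulerian and each $C_i$ contributes an even amount to every vertex degree, $R$ has all degrees even; moreover the parity hypothesis forces $|E(R)|$ to be even. By Veblen's theorem $R$ decomposes into circuits $D_1,\dots,D_m$, and because $|E(R)|$ is even the number of odd circuits among the $D_j$ is even. Hence the total number of \emph{odd} pieces among $C_1,\dots,C_k,D_1,\dots,D_m$ is at least $k$ and congruent to $k$ modulo $2$.

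The key idea is then to treat each $C_i$ and each $D_j$ as a \emph{piece} and to form the intersection graph $I$ whose vertices are the pieces, two of them being adjacent when they share a vertex of $G$. A short argument using connectivity of $G$ shows that $I$ is connected: were the pieces split into two classes with no shared vertex, the vertex set of $G$ would split into two nonempty parts with no edge between them. Now any collection of pieces inducing a connected subgraph of $I$ consists of edge-disjoint subgraphs, all of even degree, whose union is connected and therefore a single closed trail whose length is the sum of the lengths of its members. Thus it suffices to partition $V(I)$ into $k$ parts, each inducing a connected subgraph and each containing an odd number of odd pieces; merging the pieces of each part yields one closed trail, and the odd count guarantees that each such trail has odd length, producing the desired $k$-odd decomposition of $G$.

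The main obstacle, and the heart of the proof, is exactly this combinatorial partition. I would establish it by passing to a spanning tree of $I$ and inducting on $k$: rooting the tree at an odd piece and then peeling off, at a deepest odd piece, a subtree containing \emph{exactly one} odd piece leaves a connected remainder with one fewer odd piece and the correct residual parity, to which the inductive hypothesis applies; the base case $k=1$ is immediate since the whole tree then carries an odd number of odd pieces. Checking that such a peelable one-odd-piece subtree always exists as long as at least two odd pieces remain is the single delicate point; once that is in hand, everything else reduces to parity bookkeeping and the trail-merging operation described above.
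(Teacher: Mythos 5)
Your proposal is correct and follows essentially the same route as the paper: necessity via an odd circuit inside each odd closed trail, and sufficiency via a circuit decomposition containing the given odd circuits, its intersection graph, and a partition of a spanning tree of that intersection graph into $k$ subtrees each holding an odd number of odd pieces. The only difference is in the proof of the tree-partition lemma --- you peel off the subtree below a deepest odd piece (which indeed contains exactly one odd piece, settling your ``delicate point''), while the paper runs a double induction deleting leaves --- but the two arguments are routine variants of one another.
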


\begin{proof}
To prove the necessity, let $\{G_1,G_2,\ldots,G_k\}$ be a
$k$-odd decomposition of $G$. Each $G_i$ has an eulerian trail
of odd length and since each closed walk of odd length contains
an odd circuit, $G_i$ contains an odd circuit $C_i$. Hence
$\{C_1,C_2,\ldots, C_k\}$ is a set of $k$ pairwise
edge-disjoint circuits in $G$.

For the converse, let $\{C_1,C_2,\ldots, C_k\}$ be an arbitrary
set of $k$ pairwise edge-disjoint circuits of $G$. Since each
component of $G-(\bigcup_i C_i)$ is eulerian, $G$ admits a
circuit decomposition $\mathcal{K}$ that includes all the
circuits $C_1,C_2,\ldots, C_k$. Let us consider the
intersection graph $J(\mathcal{K})$ of~$\mathcal{K}$; its
vertices are the elements of $\mathcal{K}$ and edges join pairs
of elements that have a vertex of $G$ in common. Since $G$ is
connected, so is $J(\mathcal{K})$.

It is obvious that every connected subgraph of
$J(\mathcal{K})$, with vertex set a subset
$\mathcal{L}\subseteq\mathcal{K}$, uniquely determines an
eulerian subgraph of $G$. The latter subgraph will have an odd
number of edges whenever $\mathcal{L}$ contains an odd number
of odd circuits. Thus to finish the proof it is enough to show
that $\mathcal{K}$ can be partitioned into $k$ subsets, each
containing an odd number of odd circuits and each inducing a
connected subgraph of~$J(\mathcal{K})$. In fact, we may assume
that $J(\mathcal{K})$ is a tree as the general case follows
immediately with the partition of $\mathcal{K}$ obtained from a
spanning tree of $J(\mathcal{K})$.

If $\mathcal{B}$ is the set of all odd circuits from
$\mathcal{K}$, then $|\mathcal{B}|\equiv |E(G)|\equiv
k\pmod{2}$ and $|\mathcal{B}|\ge k$. Thus we may view
$J(\mathcal{K})$ as a tree $T$ having a distinguished set $B$
of vertices such that $|B|\ge k$ and $|B|\equiv k\pmod{2}$. In
this terminology, it remains to prove the following.

\medskip\noindent
Claim 1. \textit{Let $T$ be a tree, $k\ge 1$ an integer, and
$B\subseteq V(T)$ a subset with $|B|\ge k$ vertices. If
$|B|\equiv k\pmod{2}$, then the vertex set of $T$ can be
partitioned into $k$ pairwise disjoint subsets $V_1,V_2,\ldots,
V_k$ such that each $V_i$ contains an odd number of vertices
from $B$ and induces a subtree of $T$.}

\medskip\noindent
Proof of Claim~1. We proceed by induction on $k$, and for every
fixed $k$ by induction on the number of vertices of $T$. If
$k=1$, then the conclusion is vacuously true for every tree $T$
and every subset $B\subseteq V(T)$ with an odd number of
vertices. If $k=2$, take the largest subtree $T_1$ of $T$ with
an odd number of vertices from $B$, let $V_1$ be the vertex set
of $T_1$, and let $V_2=V(T)-V_1$. Clearly, the partition
$\{V_1,V_2\}$ fulfils the conditions of the claim.

For the induction step assume that $T$ has $k\ge 3$ vertices.
Let $T$ be any tree with a subset $B\subseteq V(T)$ such that
$|B|\ge k$ and $|B|\equiv k\pmod{2}$. Clearly, $T$ has at least
$k$ vertices. If $T$ has exactly $k$ vertices, then $B=V(T)$,
and the partition into singletons is the sought partition of
$V(T)$. Assume now that $T$ has more than $k$ vertices. At
least two vertices of $T$ are leaves, say $u$ and $v$. There
are two cases to consider.

\medskip\noindent
\textbf{Case 1.} Both $u$ and $v$ belong to $B$. Consider the
tree $T'=T-\{u,v\}$ and the set $B'=B-\{u,v\}$. Since $B'$ has
at least $k-2$ vertices, we can apply the induction hypothesis
to $T'$ and $B'$ for $k'=k-2$. From the induction hypothesis we
get a partition $\{V_1,V_2,\ldots,V_{k-2}\}$ of $V(T')$ such
that each $V_i$ contains an odd number of vertices from $B'$
and induces a subtree of $T'$. Then
$\{V_1,V_2,\ldots,V_{k-2},\{u\},\{v\}\}$ is the required
partition for~$T$.

\medskip\noindent
\textbf{Case 2.} One of $u$ and $v$, say $u$, does not belong
to $B$. In this case we set $T'=T-u$ and $B'=B$, and apply the
induction hypothesis to $T'$ and $B'$ for $k'=k$. We conclude
that $V(T')$ has a partition $\{V_1,V_2,\ldots,V_k\}$ such that
each $V_i$ contains an odd number of vertices from $B'$ and
induces a subtree of $T'$. One of the partition sets, say
$V_1$, contains a neighbour of $u$. Then
$\{V_1\cup\{u\},V_2,\ldots,V_k\}$ is the required partition for
$T$.

\medskip
This concludes the induction step and establishes the claim as
well as the theorem.
\end{proof}

\begin{corollary}\label{cor:2k-reg}
Let $G$ be a connected $2d$-regular graph of odd order with
$d\ge 1$. Then $G$ admits an $k$-odd decomposition for each
$k\in\{1,2,\dots, d\}$ such that $k\equiv d\pmod{2}$.
\end{corollary}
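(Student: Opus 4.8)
The plan is to derive the corollary directly from Theorem~\ref{thm:k-odd} by checking its two hypotheses for each admissible $k$: the congruence $k\equiv|E(G)|\pmod 2$ and the existence of at least $k$ pairwise edge-disjoint odd circuits. First I would dispose of the parity condition. Writing $n=|V(G)|$ for the (odd) number of vertices, $2d$-regularity gives $|E(G)|=dn$, so $|E(G)|\equiv dn\equiv d\pmod 2$ because $n$ is odd. Since the corollary assumes $k\equiv d\pmod 2$, we obtain $k\equiv|E(G)|\pmod 2$, which is exactly the congruence required by Theorem~\ref{thm:k-odd}.

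For the second hypothesis it suffices, since $k\le d$, to exhibit $d$ pairwise edge-disjoint odd circuits in $G$; any $k$ of them then witness the condition for that value of $k$. Here I would invoke Petersen's classical $2$-factor theorem, which guarantees that every $2d$-regular graph decomposes into $d$ edge-disjoint $2$-factors $F_1,\dots,F_d$. Each $F_i$ is a spanning union of vertex-disjoint circuits, so the lengths of these circuits sum to $n$. Because $n$ is odd, $F_i$ cannot consist solely of even circuits, and hence contains at least one odd circuit $C_i$. As the factors are edge-disjoint, $C_1,\dots,C_d$ are $d$ pairwise edge-disjoint odd circuits.

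Combining the two observations, for each $k\in\{1,\dots,d\}$ with $k\equiv d\pmod 2$ the graph $G$ contains at least $k$ pairwise edge-disjoint odd circuits and satisfies $k\equiv|E(G)|\pmod 2$, so Theorem~\ref{thm:k-odd} yields a $k$-odd decomposition. The only ingredient that is not mere parity bookkeeping is Petersen's $2$-factor theorem, and the genuine crux of the argument is the step that extracts an odd circuit from each factor: this is precisely where oddness of the order is used decisively, since an odd total number of vertices forces at least one odd-length constituent in every spanning $2$-regular subgraph. I would therefore state that passage most carefully, taking note that loops (length-one circuits) are already odd and cause no difficulty.
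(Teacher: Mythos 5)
Your proposal is correct and follows essentially the same route as the paper: Petersen's $2$-factor theorem to extract $d$ pairwise edge-disjoint odd circuits (one per $2$-factor, using the odd order), the parity computation $|E(G)|=dn\equiv d\equiv k\pmod 2$, and then an application of Theorem~\ref{thm:k-odd}. The only difference is that you spell out the parity bookkeeping a little more explicitly than the paper does.
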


\begin{proof}
By Petersen's $2$-factor theorem, $G$ can be decomposed into
$d$ pairwise edge-disjoint $2$-factors (see, for example,
\cite[Corollary 2.1.5]{Diestel}). Since $G$ has an odd order, each
$2$-factor contains an odd circuit, so $G$ contains at least
$d$ pairwise edge-disjoint circuits. For each $k\le d$ such
that $k\equiv d\pmod{2}$ we have $k\equiv |E(G)|\pmod{2}$, and
the conclusion now immediately follows from
Theorem~\ref{thm:k-odd}.
\end{proof}

\section{Rooted odd decompositions}\label{sec:rooted}

A $k$-odd decomposition $\mathcal{D}=\{G_1,G_2,\ldots,G_k\}$ of
an eulerian graph $G$ is \textit{rooted} if there is a vertex
$v$ of $G$, called a \textit{root} of $\mathcal{D}$, such that
each constituent $G_i$ contains~$v$.  Clearly, a graph $G$
admits a rooted $k$-odd decomposition if and only if it
contains an eulerian trail $T$ and a vertex $v$ that divides
$T$ into $k$ closed subtrails $T_1, T_2,\ldots, T_k$ based at
$v$, each of odd length.

Finding a rooted $k$-odd decomposition is a significantly more
difficult task than just finding any $k$-odd decomposition,
especially as $k$ increases. Nevertheless, every $2$-odd
decomposition is automatically rooted, so
Theorem~\ref{thm:k-odd} also provides a characterisation of
graphs that admit a $2$-odd decomposition. The following
theorem somewhat simplifies the condition and is provided with
a short independent proof.

\begin{theorem}\label{thm:2bly-odd}
An eulerian graph has a rooted $2$-odd decomposition if and
only if it is non-bipartite and has an even number of edges.
\end{theorem}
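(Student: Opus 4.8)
The plan is to prove Theorem~\ref{thm:2bly-odd} by establishing both directions. For necessity, I would observe that any eulerian graph admitting a $2$-odd decomposition $\{G_1,G_2\}$ must have $|E(G)|=|E(G_1)|+|E(G_2)|$ equal to a sum of two odd numbers, hence even; and since $G_1$ (say) is an odd closed trail, it contains an odd circuit by the argument already used in the necessity part of Theorem~\ref{thm:k-odd} (every closed walk of odd length contains an odd circuit), so $G$ contains an odd circuit and is therefore non-bipartite. This direction is routine.

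The substance lies in sufficiency. So suppose $G$ is eulerian, non-bipartite, with $|E(G)|$ even. Since $G$ is non-bipartite it contains an odd circuit $C$. The natural idea is to exhibit a single odd circuit and then split the remainder into a second odd closed trail based at a common vertex. Concretely, I would fix an odd circuit $C$ and let $v$ be any vertex of $C$. The graph $H=G-C$ (remove the edges of $C$) is again eulerian on each component, and $|E(H)|=|E(G)|-|E(C)|$ is odd, since even minus odd is odd. The goal is to traverse $H$ as a single closed trail through $v$ together with $C$, arranging that one part is the odd circuit $C$ and the complementary part forms a connected eulerian subgraph of odd size containing $v$. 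The key observation is that $\{C\}\cup\{\text{components of }H\}$, suitably amalgamated, can be organised so that the two constituents meet at $v$; this is exactly the $k=2$ instance of the connectivity-and-parity bookkeeping carried out via the tree argument (Claim~1) inside the proof of Theorem~\ref{thm:k-odd}.

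More cleanly, I would invoke Theorem~\ref{thm:k-odd} directly: to get a (not necessarily rooted) $2$-odd decomposition it suffices to find two edge-disjoint odd circuits, and to get a single odd circuit plus the rest one merely needs $G$ non-bipartite together with $|E(G)|\equiv 2\pmod 2$. The point flagged earlier in the excerpt is that \emph{every $2$-odd decomposition is automatically rooted}: if $\{G_1,G_2\}$ decomposes the connected graph $G$ into two closed trails, then because $G$ is connected the subgraphs $G_1$ and $G_2$ must share at least one vertex $v$, and that shared vertex serves as a root. Thus I would first produce any $2$-odd decomposition and then appeal to this automatic-rootedness. Producing a $2$-odd decomposition from the hypotheses reduces, by Theorem~\ref{thm:k-odd} with $k=2$, to exhibiting two edge-disjoint odd circuits; since a non-bipartite eulerian graph of even size contains one odd circuit $C$ and the complement $G-C$ has odd size on a connected eulerian piece, a second odd circuit is forced by a parity count on that piece.

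The main obstacle I anticipate is the last step: guaranteeing a \emph{second} odd circuit edge-disjoint from the first. Non-bipartiteness gives one odd circuit for free, but the hypothesis must be leveraged to produce two edge-disjoint ones, and the cleanest route is the parity argument that $G-C$ has odd size and therefore some eulerian component of odd size, which (being eulerian and of odd edge-count) must itself contain an odd circuit. I would therefore concentrate on verifying that an eulerian graph of odd edge-count always contains an odd circuit—this is immediate since an even circuit decomposition would force an even edge-count—thereby closing the gap and completing the sufficiency direction.
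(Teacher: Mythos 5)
Your argument is correct, but it takes a different route from the paper's. The paper gives a short self-contained proof of sufficiency: it chooses $C$ to be an eulerian subgraph with an odd number of edges that is \emph{maximal} with respect to edge count (not merely an odd circuit), notes that $C$ is proper since $|E(G)|$ is even, and then uses maximality to force $G-C$ to have exactly one non-trivial component, necessarily of odd size and meeting $C$; the pair $\{C,\,G-C\}$ is then the rooted decomposition directly. You instead reduce to Theorem~\ref{thm:k-odd} with $k=2$: you manufacture a second odd circuit edge-disjoint from the first by the parity observation that $G-C$ has odd size and hence contains an eulerian component of odd size (which must contain an odd circuit, since a circuit decomposition with only even circuits would give even size), and you then observe that any $2$-odd decomposition of a connected graph is automatically rooted because two connected subgraphs partitioning the edge set of a connected graph must share a vertex. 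Both steps are sound, and indeed the paper itself remarks just before the theorem that Theorem~\ref{thm:k-odd} already characterises graphs with a $2$-odd decomposition via automatic rootedness; your contribution is to verify that the hypotheses ``non-bipartite and even size'' deliver the two edge-disjoint odd circuits that Theorem~\ref{thm:k-odd} requires. What the paper's maximality argument buys is independence from the tree-partition machinery inside the proof of Theorem~\ref{thm:k-odd} and an explicit description of one constituent; what your reduction buys is a cleaner logical picture of why the simpler condition is equivalent to the one in Theorem~\ref{thm:k-odd}. (The sketch in your middle paragraph about traversing $H$ as a single closed trail through $v$ is not quite right as stated, since components of $G-C$ need not all contain $v$, but you rightly discard it in favour of the reduction, so nothing is lost.)
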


\begin{proof}
Each odd closed trail contains an odd circuit, so the necessary
condition follows immediately. For the converse, let $G$ be a
non-bipartite eulerian graph with an even number of edges. Then
$G$ contains an odd circuit and hence an eulerian subgraph with
an odd number of edges. Let $C$ be one that has the maximal
number of edges. Since $G$ has an even number of edges, $C$ is
a proper subgraph of $G$. Each component of $G-C$ is an
eulerian subgraph of $G$ sharing a vertex with $C$. Since $C$
is maximal, there is only one non-trivial component, and it
must have an odd number of edges. It follows that $\{C,G-C\}$
is a rooted $2$-odd decomposition of~$G$.
\end{proof}

We now derive Theorem~\ref{thm:main1} as a consequence of
Theorem~\ref{thm:2bly-odd}.

\begin{proof}[Proof of Theorem~\ref{thm:main1}]
Let $G$ be a connected $4$-regular graph of odd order. Clearly,
$G$ has an even number of edges. Furthermore, $G$ cannot be
bipartite, because the partite sets of a connected regular
bipartite graph have the same size, and hence such a graph has
an even number of vertices. The result now follows from
Theorem~\ref{thm:2bly-odd}.
\end{proof}

Having characterised graphs that admit a rooted $2$-odd
decomposition we can proceed to rooted $3$-odd decompositions.
The situation here is much more complicated because there exist
eulerian graphs that admit a $3$-odd decomposition, but not a
rooted one. One example, which can be easily extended to an
infinite family, is displayed in Figure~\ref{fig:3neg}. It
follows that a structural characterisation of graphs that admit
a rooted $3$-odd decomposition may be difficult. Our Theorem~2 shows
that having an odd number of vertices (or edges) is
a sufficient condition for a connected $6$-regular graph
to have a rooted $3$-odd decomposition. Trivially, this condition is also necessary.
The proof heavily
depends on connectivity arguments and will be performed in two
steps. First, in Section~\ref{sec:proof-6-conn}, we prove a
special case of Theorem~2 for $6$-edge-connected graphs. The
general case will be treated in Section~\ref{sec:proof-reduct}
after some additional preparation.

\begin{figure}[htbp]
\centerline{\includegraphics[width=20em]{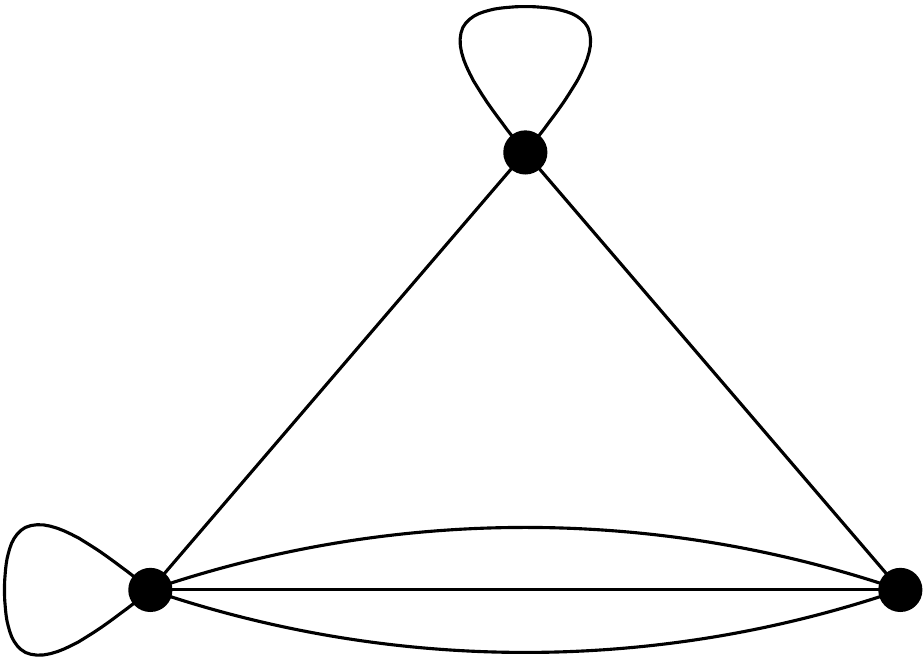}}
  \caption{An eulerian graph having no rooted $3$-odd decomposition. }
  \label{fig:3neg}
\end{figure}

\section{Proof of Theorem~2: The 6-connected case}\label{sec:proof-6-conn}

In this section we prove that every $6$-edge-connected
$6$-regular graph of odd order has a rooted $3$-odd
decomposition. To make the proof easier we pass from ordinary
graphs to signed graphs and prove a natural analogue of the
required statement for signed graphs. The statement for
unsigned graphs will follow as a trivial consequence of the
signed graph version.

Recall that a \textit{signed graph} is a graph $G$ together
with a mapping, called the \textit{signature} of $G$, which
assigns $+1$ or $-1$ to each edge. An edge receiving value $+1$
is said to be \textit{positive} while one with value $-1$ is
said to be \textit{negative}. The sign of each edge will
usually be known from the immediate context, therefore no
special notation for the signature will be required.

The signature of a signed graph is a means of introducing the
concept of balance, which is more important than the signature
itself. A circuit of a signed graph is said to be
\textit{balanced} if it contains an even number of negative
edges, and is \textit{unbalanced} otherwise. A signed graph in
which all circuits are balanced is itself called
\textit{balanced}; an \textit{unbalanced} signed graph is one
that contains at least one unbalanced circuit. In general, the
essence of any signed graph is constituted by the list of all
balanced circuits. Two signed graphs with the same underlying
graph are therefore considered to be \textit{identical} if
their lists of balanced circuits coincide. The corresponding
signatures are called \textit{equivalent}.

There is a convenient way of turning one signature into an
equivalent one. Let $G$ be a signed graph and let $U$ be a set
of vertices of $G$. If we change the sign of each edge with
exactly one end in $U$, then the product of signs on every
circuit does not change and hence the new signature is
equivalent to the previous one. This operation is called
\textit{switching} at $U$. Note that switching the signature at
$U$ has the same effect as switching at all the vertices of $U$
in a succession. It is easy to see that by successive vertex
switching we can turn any spanning tree of $G$ into an
all-positive subgraph. This fact readily implies that two
signatures are equivalent if and only if they are
\textit{switching-equivalent}, that is, if they can be
transformed into each other by a sequence of vertex switchings
\cite[Proposition~3.2]{Zaslav1}. In particular, a signed graph
is balanced if and only if its signature is equivalent to the
all-positive signature. For a more detailed introduction to
signed graphs we refer the reader to Zaslavsky \cite{Zaslav1}.

Observe that switching the signature of a signed eulerian graph
does not change the parity of the number of negative edges.
Therefore all signed eulerian graphs fall into two natural
subclasses depending on whether the number of negative edges is
even or odd. Accordingly, a signed eulerian graph $G$ will be
called \textit{even} if it has an even of negative edges,
otherwise $G$ will be called
\textit{odd}. It is easy to see that even eulerian graphs can
be balanced as well as unbalanced. In contrast, odd eulerian graphs
are necessarily unbalanced.

A decomposition $\mathcal{D}=\{G_1,G_2,\ldots,\penalty0 G_k\}$
of a signed eulerian graph $G$ will be called \textit{odd}, or
more specifically \textit{$k$-odd}, if each $G_i$ is an odd
eulerian signed subgraph of $G$. It may be useful to realise that a
decomposition of an unsigned graph $G$ is odd if and only if it
is odd for the signed graph obtained from $G$ by assigning $-1$
to each edge.

We proceed to the main result of this section which gives a
sufficient condition for a $6$-regular signed graph to have a
rooted $3$-odd decomposition. Clearly, every signed eulerian
graph that admits a $3$-odd decomposition must have an odd
number of negative edges, and must contain at least three
pairwise edge-disjoint unbalanced circuits, one for each
constituent. We show that for $6$-edge-connected $6$-regular
graphs these necessary conditions are also sufficient. In fact,
we can show that it is enough to require two edge-disjoint
unbalanced circuits -- the parity of the number of negative
edges will ensure the existence of three.

\begin{lemma}\label{lemma:3_unbal_circuits}
Let $G$ be a signed eulerian graph with an odd number of
negative edges which contains two edge-disjoint unbalanced
circuits. Then $G$ contains at least three pairwise
edge-disjoint unbalanced circuits.
\end{lemma}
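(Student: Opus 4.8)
The plan is to argue by a simple parity count on negative edges, combined with the fact recorded earlier in the excerpt that an odd eulerian graph is necessarily unbalanced. Let $C_1$ and $C_2$ be the two given edge-disjoint unbalanced circuits. By definition each of them carries an odd number of negative edges, so together they use an even number of negative edges. Since $G$ has an odd number of negative edges in total and $C_1$ and $C_2$ are edge-disjoint, the subgraph $H=G-(C_1\cup C_2)$ inherits an odd number of negative edges (odd minus even).

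Next I would examine the degrees and connectivity of $H$. Removing the edges of a circuit decreases the degree of every vertex by an even number, so removing the two edge-disjoint circuits $C_1$ and $C_2$ leaves every vertex of $G$ with even degree; hence $H$ has all degrees even, and each of its components is an eulerian subgraph of $G$. The total number of negative edges of $H$ equals the sum of the numbers of negative edges over its components, and this total is odd; therefore at least one component $K$ of $H$ must contain an odd number of negative edges.

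This component $K$ is then a connected graph with all degrees even and an odd number of negative edges, i.e.\ an odd eulerian signed graph. As observed in the preliminaries, an odd eulerian graph cannot be balanced (a balanced signed graph is switching-equivalent to the all-positive one, which has an even number of negative edges, and switching preserves this parity). Consequently $K$ contains an unbalanced circuit $C_3$. Since $K$ is a subgraph of $H=G-(C_1\cup C_2)$, the circuit $C_3$ is edge-disjoint from both $C_1$ and $C_2$, and $\{C_1,C_2,C_3\}$ is the desired collection of three pairwise edge-disjoint unbalanced circuits.

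There is no serious obstacle here: the argument is a clean parity count, and the only point requiring a little care is that $H$ may be disconnected, so one must locate the single component carrying the surplus odd number of negative edges rather than argue about $H$ as a whole. The role of the hypothesis that $G$ has an odd number of negative edges is precisely to force this surplus, which is what guarantees the third unbalanced circuit.
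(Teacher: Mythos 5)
Your argument is correct and is essentially the paper's own proof: both form $G-(C_1\cup C_2)$, use the parity of negative edges to locate a component with an odd number of negative edges, and extract an unbalanced circuit from that odd eulerian component. You simply spell out a couple of steps the paper leaves implicit (why the removed circuits carry an even number of negative edges, and why an odd eulerian component must be unbalanced).
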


\begin{proof}
Take two edge-disjoint unbalanced circuits $C_1$ and $C_2$ of
$G$ and form the signed graph $G'=G-(C_1\cup C_2)$. The total
number of negative edges in components of $G'$ is odd, so $G'$
has a component $K$ with an odd number of negative edges. Since
$K$ is eulerian, it contains an unbalanced circuit $C_3$. The
circuits $C_1$, $C_2$, and $C_3$ are obviously pairwise
edge-disjoint, as required.
\end{proof}

Now we are ready for the main result of this section.

\begin{theorem}\label{thm:sign6-reg}
Let $G$ be a $6$-edge-connected $6$-regular signed graph with
an odd number of negative edges which contains two edge
disjoint unbalanced circuits. Then $G$ has a rooted $3$-odd
decomposition.
\end{theorem}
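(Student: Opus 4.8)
The plan is to recast the conclusion in a more workable form and then to shrink the graph by repeated splitting-off, always leaving the prospective root untouched, until a trivial base case remains. Since the root vertex $v$ has degree $6$, any closed eulerian trail of $G$ based at $v$ passes through $v$ exactly three times and therefore splits canonically into three closed subtrails $T_1,T_2,T_3$ based at $v$; consequently a rooted $3$-odd decomposition is precisely a partition of $E(G)$ into three connected eulerian subgraphs sharing a common vertex $v$, each carrying an odd number of negative edges. By Lemma~\ref{lemma:3_unbal_circuits} the hypotheses already supply three pairwise edge-disjoint unbalanced circuits, and the whole graph has an odd number of negative edges, so it remains to produce such a partition for some choice of $v$.

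Fix any vertex $v$ as the root and never split at it. At any other vertex $w$ I would split off a pair of edges $wx$, $wy$, replacing them by a single edge $xy$ whose sign is the product of the signs of $wx$ and $wy$. This product rule preserves the balance of every circuit (a circuit through $w$ that uses $wx,wy$ corresponds to one using $xy$), and in particular it preserves the parity of the number of negative edges: in each of the three cases (both negative, exactly one negative, neither negative) that number changes by an even amount. Splitting off also keeps the graph eulerian and keeps every vertex other than $w$ of degree six. Crucially, a rooted $3$-odd decomposition of the reduced graph lifts back to one of $G$: each new edge $xy$ is replaced by the path $x\,w\,y$, the incidences and degree at $v$ are untouched, and the parities of the three constituents are unaffected. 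Iterating, I would drive the degree of every non-root vertex to zero and delete it, arriving at a bouquet $B_3$ at $v$ with three loops. With an odd number of negative edges and two edge-disjoint unbalanced (hence negative) loops, exactly three loops are negative, and the three loops themselves are the desired rooted decomposition.

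For this scheme to close, each split must simultaneously preserve three properties: (i) the graph stays $6$-edge-connected on the surviving vertices, (ii) it keeps an odd number of negative edges, and (iii) it keeps at least two edge-disjoint unbalanced circuits. Property (ii) is automatic from the product rule, and property (i) is exactly the content of the splitting-off theorems of Lov\'asz and Mader, which guarantee that at $w$ an admissible pair preserving the local edge-connectivity between all other vertices always exists in an even $6$-edge-connected graph. The hard part will be reconciling (i) with (iii): among the admissible pairs produced by Mader's theorem I must find one whose split does not render all the available unbalanced circuits balanced, i.e. one that keeps two edge-disjoint unbalanced circuits. This is where I expect the real difficulty to lie, and where signed-graph machinery---switching the signature to normalise where the negative edges sit, and tracking unbalanced circuits across a split---becomes the decisive technical tool.

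As a fallback I would argue by an extremal principle: using Menger's theorem and the abundance of edge-disjoint paths in a $6$-edge-connected graph, seed three edge-disjoint, odd, connected eulerian subgraphs $H_1,H_2,H_3$ through a common vertex from the three unbalanced circuits, and take a configuration maximising $\sum_i|E(H_i)|$. If edges remain, the leftover graph $R$ is a nonempty eulerian subgraph with an even number of negative edges. Any component of $R$ with an even number of negative edges merges into a constituent it meets, contradicting maximality; and if two distinct odd-negative components meet a common constituent $H_j$, both can be absorbed into $H_j$ at once (odd $+$ odd $+$ odd is odd), again a contradiction. Hence the only surviving configuration has odd-negative leftover components attached to pairwise distinct constituents, and the entire difficulty collapses to using $6$-edge-connectivity---through a cut analysis of the at least six skeleton edges leaving each such component---to reroute one constituent so that it reaches two of these components simultaneously. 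This stubborn final case is, I believe, the main obstacle in either approach.
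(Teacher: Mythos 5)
Your overall framework---induction by splitting off vertices, the product rule for signs, and the observation that a rooted $3$-odd decomposition of the reduced graph lifts back to $G$---matches the paper's proof (its Claims~1 and~2, using Frank's strengthening of Lov\'asz/Mader). But the proposal has a genuine gap exactly where you say you expect the difficulty to lie: you never show that an admissible split preserving two edge-disjoint unbalanced circuits actually exists, nor do you handle the alternative when it does not. That alternative is the entire substance of the paper's Claim~3: one takes three pairwise edge-disjoint unbalanced circuits $C_1,C_2,C_3$ (via Lemma~\ref{lemma:3_unbal_circuits}), and runs a case analysis on how they sit in $G$ --- a vertex on at most one of them can be split safely; otherwise the complementary subgraph $H=G-(C_1\cup C_2\cup C_3)$ is a union of circuits, and depending on whether $H$ is balanced, whether some vertex lies on all three $C_i$, and whether components of $H$ mix "types," one either constructs the rooted decomposition directly or locates a splittable vertex. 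None of this is present in your write-up, and your fallback extremal argument ends in the same unresolved state ("reroute one constituent so that it reaches two of these components simultaneously" is asserted, not proved). An honest admission of where the difficulty lies is not a proof of that step.

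There is also a structural problem with your plan to \emph{fix the root $v$ in advance} and reduce all the way to a bouquet $B_3$ at $v$. If that worked for an arbitrary $v$, it would establish that \emph{every} vertex of a $6$-edge-connected $6$-regular graph of odd order is a root of some $3$-odd decomposition --- which is precisely the content of the paper's Conjecture~2 (for $d=3$) and is left open there. The paper's proof deliberately does \emph{not} choose the root beforehand; the root emerges from the configuration of $C_1,C_2,C_3$ (e.g.\ a vertex lying on all three circuits, or a vertex of $C_1\cap C_2$ with a suitable chord). A secondary technical issue: splitting off one pair at a time leaves the intermediate vertex with degree $4$, so the intermediate graphs are neither $6$-regular nor $6$-edge-connected, and your induction hypothesis no longer applies to them; the paper avoids this by performing a complete split of a vertex (all three pairs at once), which keeps the graph $6$-regular and, by Frank's theorem, $6$-edge-connected.
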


\begin{proof}
We prove the result by induction on the number of vertices. If
$G$ has a single vertex, then $G$ must be a bouquet of three
negative loops, and the conclusion for $G$ is clearly holds.

For the induction step, and throughout the rest of the proof,
let $G$ be a $6$-edge-connected $6$-regular signed graph with
an odd number of negative edges which contains two edge
disjoint unbalanced circuits on $n\ge 2$ vertices.

Consider an arbitrary vertex $v$ of $G$ and let $e_1, e_2,
\ldots, e_6$ be the edges incident with $v$ listed in a certain
fixed order, and let $v_i$ denote the other end of $e_i$. Let
us form a graph $G'$ of order $n-1$ by removing $v$ from $G$
and by adding three new edges $e_i'=v_iv_{i+3}$ for
$i\in\{1,2,3\}$ to $G-v$; note that by doing this we may
introduce parallel edges and loops. We define the signature for
$G'$ in a natural way: the sign of each edge $v_iv_{i+3}$ will
be obtained by multiplying the sign of $v_iv$ with the sign of
$vv_{i+3}$, the signs of all other edges being directly
inherited from $G$. We will say that the signed graph $G'$ is
obtained by \textit{splitting off} the vertex $v$ from $G$.

In 1992, Frank \cite[Theorem A$'$]{Frank}, generalising an
earlier result of Lov\'asz \cite{Lovasz}, proved that every
vertex of even degree $d\ge 4$ in a $2$-edge-connected graph
$K$ can be split off in a similar manner as defined above to
produce a graph $K'$ that has the same edge-connectivity as
$K$; that is, $\lambda(K')=\lambda(K)$. Using this fact we can
prove the following.

\bigskip\noindent
Claim 1. \textit{Every vertex of $G$ can be split in such a way
that the resulting signed graph $G'$ is $6$-edge-connected,
$6$-regular, and has an odd number of negative edges.}

\medskip\noindent
Proof of Claim 1. Given a vertex $v$ of $G$, let us perform
splitting in the way guaranteed by the result of Frank
\cite{Frank}. It is obvious that $G'$ is $6$-regular and
$6$-edge-connected. Observe that the signature for $G'$ has
been defined in such a way that the number of negative edges in
$G'$ has the same parity as that of $G'$. Therefore $G'$ is odd
and thus has all the properties stated. This proves Claim~1.

\bigskip\noindent Claim 2. \textit{Let $G'$ be a connected
$6$-regular signed graph with an odd number negative edges
obtained from $G$ by splitting off a vertex $v$ of $G$. If $G'$
admits a rooted $3$-odd decomposition, then so does $G$.}

\medskip\noindent
Proof of Claim~2. The graph $G$ can be reconstructed from $G'$ by
first subdividing each edge $e_i'=v_iv_{i+3}$, where
$i\in\{1,2,3\}$, with a new vertex, then by identifying the
three new vertices into one -- the vertex $v$ -- and by
reinstating the original signature of $G$ on the newly formed
edges of $G$. Let $\{G_1',G_2',G_1'\}$ be a rooted $3$-odd
decomposition of $G'$. The process in which $G$ arises from
$G'$ produces from each $G_i'$ an eulerian subgraph $G_i$ of
$G$ with the parity of the number of negative edges preserved.
Therefore $\{G_1,G_2,G_3\}$ is a rooted $3$-odd decomposition
of $G$, and Claim~2 is proved.

\medskip\noindent
To finish the proof of the theorem it suffices to prove the
following claim.

\bigskip\noindent
Claim 3. \textit{At least one of the following statements holds
for $G$:}
\begin{itemize}
\item[(1)] \textit{$G$ admits a rooted $3$-odd
    decomposition.}
\item[(2)] \textit{$G$ contains a vertex that can be split
    in such a way that the resulting signed graph $G'$ is
    $6$-edge-connected $6$-regular signed graph with an odd
number of negative edges which contains two edge disjoint
unbalanced circuits.}
\end{itemize}

\noindent Proof of Claim 3. To prove the claim it is enough to
show that either $G$ admits a rooted $3$-odd decomposition or
$G$ has a vertex such that its splitting off from $G$ in
accordance with Claim~1 produces a signed graph with two
edge-disjoint unbalanced circuit.
The remaining conditions are automatically fulfilled.

In order to do it we first recall that, by
Lemma~\ref{lemma:3_unbal_circuits}, $G$ contains three
edge-disjoint unbalanced circuits $C_1$, $C_2$, and $C_3$. We
will analyse the possible positions of $\{C_1,C_2,C_3\}$ within
$G$ and in each case we show that either (1) or (2) holds.

If $G$ contains a vertex $v$ that belongs to at most one of
$C_1$, $C_2$, and $C_3$, we split $v$ according to Claim~1. The
resulting graph $G'$ is $6$-regular, $6$-edge-connected,
retains at least two edge-disjoint unbalanced circuits, and
has an odd number of negative edges. In other words, (2) holds. Thus we may assume that
each vertex of $G$ is contained in at least two circuits from $\{C_1, C_2, C_3\}$.
Consider the subgraph $H$ of $G$ obtained from $G$
by removing all edges of $C_1\cup C_2\cup C_3$ and by deleting
isolated vertices that may result. Since $G$ is $6$-regular,
each component of $H$ is a circuit whose vertices lie in
$C_1\cup C_2\cup C_3$. Moreover, $G=C_1\cup C_2\cup C_3\cup H$.

Next, assume that $G$ contains a vertex $v$ belonging to all
three circuits $C_1$, $C_2$, and~$C_3$. If $H$ is balanced,
then for each $i\in\{1,2,3\}$ we can form a subgraph $G_i$ by
taking the union of $C_i$ with some of the circuits of $H$ that
share a vertex with $C_i$. Clearly, each $G_i$ is an odd
eulerian signed graph. Furthermore, if each circuit of $H$ is absorbed
into precisely one $G_i$, then $\{G_1,G_2,G_3\}$ becomes a
$3$-odd decomposition of $G$ with root at $v$. This
verifies~(1). If $H$ is unbalanced, $H$ contains a pair of
disjoint unbalanced circuits $D_1$ and $D_2$ because the total
number of negative edges in $H$ is even. Clearly, both of $D_1$
and $D_2$ are disjoint from~$v$. We now split $v$ according to
Claim~1, so $D_1$ and $D_2$ will be inherited into the
resulting graph $G'$. Taking into account Claim~1 we see that
$G'$ has all the required properties. Hence (2) is satisfied.

For the rest of the proof we may assume that every vertex of
$G$ lies on precisely two of the circuits $C_1$, $C_2$, and
$C_3$. In particular, $H$ is a $2$-factor. By parity, $H$
contains an even number of unbalanced circuits. If $H$ contains
at least two unbalanced circuits, say $D_1$ and $D_2$. Take any
vertex $v$ of $D_1$ and split it in accordance with Claim~1 to
produce a graph $G'$. There exists exactly one circuit
$C_j\in\{C_1, C_2, C_3\}$ that does not contain $v$.
Furthermore, $D_2$ also does not contain $v$ because $D_1\cap
D_2=\emptyset$. Therefore both $C_j$ and $D_2$ are inherited to
$G'$, which establishes (2).

Thus we may assume that $H$ is balanced. Recall that each
vertex of $G$ is of one of three types, depending on which pair
of circuits from $\{C_1, C_2, C_3\}$ it belongs to. If $H$
contains a circuit $B$ with vertices of different types, we
construct a $3$-odd decomposition of $G$ as follows. Without
loss of generality we may assume that $B$ has a vertex $v$ that
belongs to $C_1\cap C_2$ and a vertex $w$ that belongs to
$C_1\cap C_3$. In this situation $C_1$, $B\cup C_2$, and $C_3$
are three closed trails that share the vertex~$v$. We now
extend $\{C_1, C_2, B\cup C_3\}$ to a decomposition
$\{G_1,G_2,G_3\}$ of $G$ by adding each component $K$ of $H$ to
a member of $\{C_1,C_2, B\cup C_3\}$ which is intersected by
$K$. It is easy to see that $\{G_1,G_2,G_3\}$ is indeed a
$3$-odd decomposition of $G$ rooted at $v$, which verifies (1).

We are left with the case where $H$ is balanced and each
component of $H$ contains vertices of the same type. Pick an
arbitrary vertex $v$ of $G$. It belongs to exactly two circuits
from $\{C_1,C_2,C_3\}$, say to $C_1$ and $C_2$. There is an
edge $e$ of $H$ incident with $v$. Since $G$ is
$6$-edge-connected, $e$ cannot be a loop, so $e=vw$ for some
vertex $w\ne v$. The vertex $w$ must have the same type as $v$,
so $w$ also belongs to $C_1\cap C_2$; in particular, $e$ is a
chord of both $C_1$ and $C_2$. Assume that $w$ is not a
neighbour of $v$ in at least one of $C_1$ and $C_2$, say in
$C_1$. Then $e$ divides $C_1$ into two edge-disjoint
$v$-$w$-paths $P$ and $Q$ of length at least~$2$. Since $C_1$
is unbalanced, exactly one of the circuits $Pe$ and $eQ$ is
unbalanced, say $Pe$. Now $Pe$, $C_2$, and $C_3$ are three
pairwise edge-disjoint unbalanced circuits. Furthermore, any
inner vertex $u$ of $Q$ belongs to exactly two members of
$\{C_1,C_2,C_3\}$ one of which is $C_1$. It follows that $u$
belongs to only one of the circuits $\{Pe, C_2, C_3\}$, and by
the case already treated this vertex can be split off to fulfil
(2). Hence, if $G$ fails to satisfy (1) or (2), then $w$ is a
neighbour of $v$ in both $C_1$ and $C_2$. In such a case,
however, we can repeat the same reasoning for $w$ in place of
$v$, and so on all around the same component of~$H$. As a
result of this consideration we deduce that $G=C_1\cup C_2\cup
H$, which is absurd. This contradiction shows that either (1)
or (2) holds for $G$. This completes the proof of Claim~3 as
well as that of the theorem.
\end{proof}

\begin{corollary}\label{cor:3-odd_6-conn}
Every $6$-edge-connected $6$-regular graph of odd order has a
rooted  $3$-odd decomposition.
\end{corollary}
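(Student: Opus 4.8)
The plan is to derive this unsigned corollary directly from its signed counterpart, Theorem~\ref{thm:sign6-reg}, by equipping $G$ with the all-negative signature. Recall the observation made earlier that a decomposition of an unsigned graph is odd precisely when it is odd for the signed graph obtained by assigning $-1$ to every edge. Consequently a rooted $3$-odd decomposition of the all-negative signed graph is literally the same object as a rooted $3$-odd decomposition of the underlying unsigned graph, and so it suffices to check that the all-negative signing of $G$ satisfies the three hypotheses of Theorem~\ref{thm:sign6-reg}.

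Two of these hypotheses are immediate. The signed graph inherits from $G$ the properties of being $6$-regular and $6$-edge-connected. Moreover, with every edge negative, the number of negative edges equals $|E(G)|$; since $G$ is $6$-regular on an odd number $n$ of vertices, we have $|E(G)|=3n$, which is odd. Thus the all-negative signing indeed has an odd number of negative edges.

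It remains to exhibit two edge-disjoint unbalanced circuits. In the all-negative signing a circuit is unbalanced exactly when it has odd length, so this reduces to finding two edge-disjoint odd circuits in $G$. Here I would argue exactly as in the proof of Corollary~\ref{cor:2k-reg}: by Petersen's $2$-factor theorem $G$ decomposes into three pairwise edge-disjoint $2$-factors, and because $n$ is odd each $2$-factor must contain an odd circuit. This produces three, and in particular two, pairwise edge-disjoint odd circuits, which serve as the required unbalanced circuits.

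With all hypotheses verified, Theorem~\ref{thm:sign6-reg} yields a rooted $3$-odd decomposition of the signed graph, which is precisely the desired decomposition of $G$. I do not expect any genuinely hard step here: the corollary is a formal consequence of the signed theorem, and the only real content is the elementary parity count $|E(G)|=3n$ together with the standard $2$-factor argument that supplies the odd (hence unbalanced) circuits.
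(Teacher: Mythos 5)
Your proof is correct and follows essentially the same route as the paper: endow $G$ with the all-negative signature, note that the odd number of edges gives an odd number of negative edges, use Petersen's $2$-factor theorem to extract edge-disjoint odd (hence unbalanced) circuits, and apply Theorem~\ref{thm:sign6-reg}. No differences worth noting.
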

\begin{proof}
Let us endow $G$ with the all-negative signature. Since $G$ has
an odd number of edges, as a signed graph $G$ is odd.
Petersen's $2$-factor theorem further implies that $G$ can
be decomposed into three pairwise edge-disjoint $2$-factors,
and since the number of vertices of $G$ is odd, each of these
2-factors contains an odd circuit. Thus $G$ contains at least three
unbalanced circuits. Theorem~\ref{thm:sign6-reg} now yields that
under the all-negative signature $G$ has a rooted $3$-odd
decomposition. This decomposition is clearly $3$-odd also in the
unsigned sense, and the result is proved.
\end{proof}

\begin{remark}
{\rm The assumption of Theorem~\ref{thm:sign6-reg} requiring a
$6$-regular graph $G$ to be $6$-edge-connected is essential and
cannot be relaxed. Figure~\ref{fig:bez_rooted} displays
odd signed $6$-regular graphs $G_1$ and $G_2$ with $\lambda(G_1)=2$
and $\lambda(G_2)=4$ neither of which admits a rooted $3$-odd decomposition
(edges not labelled are positive). This shows that Theorem~2
does not directly generalise to signed $6$-regular graphs without
additional assumptions.}
\end{remark}

\begin{figure}[htbp]
\centerline{\includegraphics[width=30em]{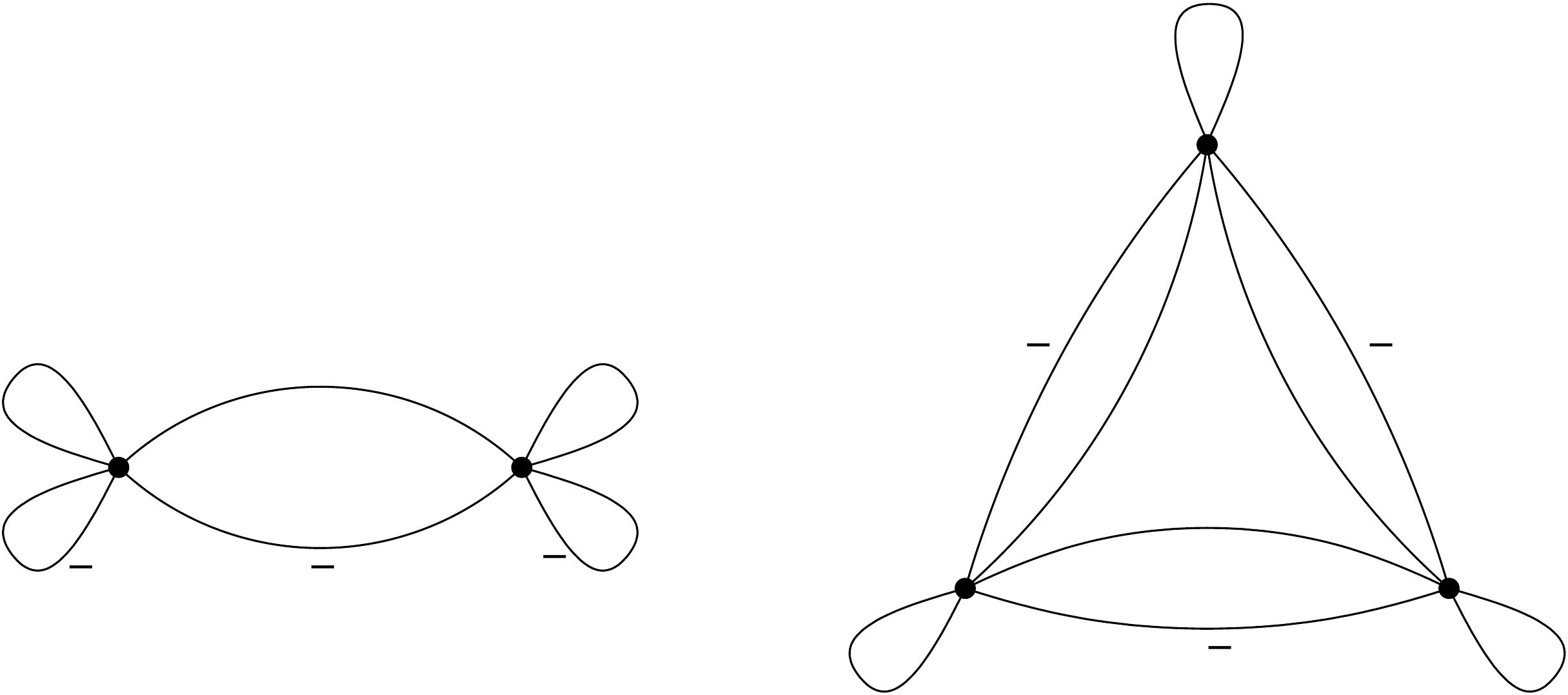}}
  \caption{Connected $5$-regular signed graph with no $3$-odd decomposition.}
  \label{fig:bez_rooted}
\end{figure}

\begin{remark}
\rm{The formulation of Theorem~\ref{thm:sign6-reg} can be
improved by using concepts and results from \cite{euler}.
Namely, the assumption requiring a signed eulerian graph $G$ to
have at least two edge-disjoint unbalanced circuits can be
replaced by the condition that $G$ cannot be turned into a
balanced graph by deleting a single edge. To be more precise,
let us call a signed graph $G$ \textit{tightly unbalanced} if
it is unbalanced and contains an edge $e$ such that $G-e$ is
balanced. If $G$ is unbalanced but not tightly unbalanced we
say that it is \textit{amply unbalanced}. Amply and tightly
unbalanced signed graphs are important classes of signed graphs
which naturally appear in the study of various problems, see
for example \cite{cycosig, euler, sigmaxgen}. In
\cite[Corollary~3.4]{euler} it is shown that an eulerian signed
graph admits a nowhere-zero integer flow if and only if it is
amply unbalanced. Further, the equivalence
(a)$\Leftrightarrow$(c) in Theorem~4.2 from \cite{euler} states
that a signed eulerian graph contains two edge-disjoint
unbalanced circuits if and only if it is amply unbalanced. The
proof of this equivalence is non-trivial.}
\end{remark}

\section{Proof of Theorem~2: The general case}\label{sec:proof-reduct}

In this section we prove Theorem~2. We begin with two simple
lemmas.

\begin{lemma}\label{lemma:oddvertices}
If a connected graph has exactly $2k$ vertices of odd degree,
then it contains a set of $k$ pairwise edge-disjoint paths
whose ends cover all odd-degree vertices.
\end{lemma}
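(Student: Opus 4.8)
The plan is to argue by induction on $k$, where $2k$ is the number of odd-degree vertices. The base case $k=0$ is vacuous, the empty family of paths serving the purpose; for $k=1$ connectivity alone supplies a single path joining the two odd-degree vertices, and nothing else is needed.

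For the inductive step I would pick two distinct odd-degree vertices $u$ and $v$ and, using the connectivity of $G$, choose an arbitrary $u$-$v$-path $P$. The crucial point is to track how deleting the edges of $P$ affects degree parities. Since $P$ is a path, it meets $u$ and $v$ in exactly one edge each and every internal vertex in exactly two edges; hence in $G'=G-E(P)$ the degrees of $u$ and $v$ each drop by one, flipping their parity to even, while every other degree changes by an even amount or not at all. Consequently $G'$ has exactly $2k-2$ vertices of odd degree, namely the odd-degree vertices of $G$ other than $u$ and $v$.

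The main obstacle is that $G'$ need not be connected, so the induction hypothesis cannot be applied to $G'$ as a whole. I would overcome this by applying induction separately to each component of $G'$. Each such component is connected and, by the handshake lemma, has an even number $2k_j$ of odd-degree vertices; since these numbers sum to $2(k-1)$, each satisfies $k_j\le k-1<k$, so the induction hypothesis yields $k_j$ pairwise edge-disjoint paths inside that component whose ends cover its odd-degree vertices. Paths in distinct components are automatically edge-disjoint, and all of them are edge-disjoint from $P$ since their edges lie in $G'=G-E(P)$.

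Finally I would assemble the family consisting of $P$ together with the $\sum_j k_j=k-1$ paths produced componentwise, obtaining $k$ pairwise edge-disjoint paths in total. Their ends cover $u$, $v$, and every odd-degree vertex of $G'$, which is exactly the full set of $2k$ odd-degree vertices of $G$, completing the induction.
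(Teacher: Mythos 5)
Your proof is correct, and it takes a genuinely different route from the paper. The paper disposes of this lemma in one line by invoking the classical Listing--Lucas theorem (cited from K\"onig): a connected graph with exactly $2k$ odd-degree vertices decomposes into $k$ open trails, and since the trail ends are forced to be precisely the $2k$ odd-degree vertices, one extracts a path between the endpoints of each trail. Your argument is instead a self-contained induction on $k$: peel off one path between two odd-degree vertices, observe that only the two endpoints change parity, and recurse on each component of what remains. The key step --- handling the possible disconnection of $G-E(P)$ by applying the hypothesis componentwise, using that each component has an even number $2k_j$ of odd-degree vertices with $\sum_j k_j=k-1$ --- is exactly right, and the edge-disjointness and coverage bookkeeping is sound. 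What the paper's approach buys is brevity plus the stronger conclusion that the trails form a decomposition of the whole edge set (not needed here); what yours buys is that it avoids an external reference entirely and produces paths directly rather than trails from which paths must then be extracted. Either proof is acceptable.
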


\begin{proof}
This is an immediate consequence of the following classical
result due to Listing and Lucas (see K\"onig \cite[Satz~4,
p.~22]{Konig}): Every connected graph $G$ with exactly $2k$
vertices of odd degree can be decomposed into $k$ open trails,
and each such decomposition contains at least $k$ open trails.
\end{proof}

Consider a vertex $v$ of a graph $G$ and a sequence
$(v_1,v_2,\ldots,v_k)$ of vertices of $G$ not containing $v$;
note that we permit $v_i=v_j$ for $i\ne j$. A
\textit{$v$-$(v_1,v_2,\ldots,v_k)$-fan} in $G$ is a collection
$\mathcal{F}=\{F_1,F_2,\ldots,F_k\}$ of $k$ edge-disjoint paths
such that the path $F_i$ joins $v$ to $v_i$ for each
$i\in\{1,2,\ldots,k\}$.

\begin{lemma}\label{lemma:kconn-kfan}
Let $G$ be a $k$-edge-connected graph. Then for every vertex
$v$ and an arbitrary sequence $v_1,v_2,\ldots,v_k$ of vertices
of $G$ there exists a $v$-$(v_1,v_2,\ldots,v_k)$-fan in $G$.
\end{lemma}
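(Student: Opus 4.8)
The plan is to reduce the existence of the fan to a statement about edge-disjoint paths between two single vertices, which can then be settled by Menger's theorem in its edge version. First I would augment $G$ to a graph $G^{+}$ by adding one new vertex $t$ together with $k$ new edges $tv_1, tv_2, \ldots, tv_k$, one for each term of the sequence. Note that if $v_i=v_j$ for some $i\ne j$, then $t$ is joined to this common vertex by parallel edges, so $t$ has degree exactly $k$ and each new edge carries a distinct index $i$; since the sequence does not contain $v$, no new edge is incident with $v$.

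Next I would verify that in $G^{+}$ every cut separating $v$ from $t$ has at least $k$ edges. Such a cut is determined by a partition $\{S, V(G^{+})\setminus S\}$ with $v\in S$ and $t\notin S$; writing $A=S\cap V(G)$, we have $v\in A$. The cut consists of the edges of $G$ running between $A$ and $V(G)\setminus A$, together with all new edges $tv_i$ for which $v_i\in A$. If $A$ is a proper non-empty subset of $V(G)$, then the $k$-edge-connectivity of $G$ already forces the first group to contain at least $k$ edges. The remaining possibility is $A=V(G)$: here the edges of $G$ contribute nothing to the cut, but now every target $v_i$ lies in $A$, so all $k$ new edges cross. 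In either case the cut has size at least $k$, so by Menger's theorem $G^{+}$ contains $k$ pairwise edge-disjoint $v$-$t$ paths.

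To finish, observe that since $t$ has degree $k$ and the $k$ paths are edge-disjoint, each path must terminate with a distinct new edge; as there are exactly $k$ paths and $k$ new edges, the edges $tv_1,\ldots,tv_k$ are used up in bijective fashion. Deleting from each path its final new edge $tv_i$ leaves a $v$-$v_i$ path $F_i$ lying entirely within $G$, and the collection $\{F_1,F_2,\ldots,F_k\}$ is pairwise edge-disjoint because the paths were edge-disjoint in $G^{+}$. This is precisely the desired $v$-$(v_1,v_2,\ldots,v_k)$-fan.

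The only delicate point is the cut analysis, and within it the boundary case $A=V(G)$: there the connectivity hypothesis does \emph{not} supply the $k$ crossing edges, and one must instead rely on the $k$ newly added edges at $t$. Handling repeated targets correctly, by joining $t$ to a repeated vertex with the appropriate number of parallel edges, is exactly what guarantees that the resulting paths respect the multiplicities prescribed by the sequence $(v_1,v_2,\ldots,v_k)$.
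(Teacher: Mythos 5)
Your proposal is correct and follows essentially the same route as the paper: augment $G$ with an auxiliary vertex joined to $v_1,\dots,v_k$ (with multiplicity for repeated targets) and apply the edge version of Menger's theorem, then strip the new edges to obtain the fan. The only difference is cosmetic: the paper proves that the augmented graph is $k$-edge-connected in full, whereas you bound only the cuts separating $v$ from the auxiliary vertex, which is all Menger requires and which you verify correctly, including the boundary case where the cut isolates the new vertex.
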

\begin{proof}
Consider the graph $G^{+}$ arising from $G$ by adding a new
vertex $w$ together with $k$ new edges $wv_1,wv_2,\ldots,
wv_k$. We first show that $G^{+}$ is $k$-edge-connected.
Suppose not. Then $G^{+}$ contains an $m$-edge-cut $S$ with
$m<k$ separating $w$ from some vertex $z$ of $G$. At least one
of the edges incident with $w$ does not belong to $S$, say
$wv_t$. Since $G$ is $k$-edge-connected, it contains $k$
pairwise edge-disjoint $v_t$-$z$-paths. As $m<k$, one of the
paths, say $Q$, includes no edge from $S$. It follows that
$wv_tQ$ is a $w$-$z$-path in $G^{+}-S$, contradicting the
choice of $S$. To finish the proof observe that the
edge-connectivity version of Menger's Theorem implies that for
any vertex $w\ne v$ there exist $k$ pairwise edge-disjoint
$v$-$w$ paths in~$G^{+}$. The removal of $w$ from $G^{+}$
leaves in $G$ the required $k$ paths which form a
$v$-$(v_1,v_2,\ldots,v_k)$-fan in $G$.
\end{proof}

Now we are in position to prove Theorem~2.

\vskip3mm\noindent\textit{Proof of Theorem~2.} Let $G$ be a
smallest counterexample to Theorem~2. Below, in
Propositions~\ref{prop:2-cuts} and~\ref{prop:4-cuts}, we show
that $G$ contains neither a 2-edge-cut nor a 4-edge-cut and
therefore must be 6-edge-connected. However, by
Corollary~\ref{cor:3-odd_6-conn}, this is impossible. What remains is
to prove Propositions~\ref{prop:2-cuts}
and~\ref{prop:4-cuts}.

\begin{proposition}\label{prop:2-cuts}
A smallest counterexample to Theorem~2 must be
$4$-edge-connected.
\end{proposition}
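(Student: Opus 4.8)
The plan is to argue by the minimality of $G$. Since $G$ is eulerian, its edge-connectivity $\lambda(G)$ is even; as $G$ is connected, if it were not $4$-edge-connected we would have $\lambda(G)=2$. So I would suppose for contradiction that $G$ has a $2$-edge-cut, and in fact choose one realising $\lambda(G)$, so that the induced vertex partition $\{V_1,V_2\}$ has both $G[V_1]$ and $G[V_2]$ connected: any disconnection of a side would yield a $1$-edge-cut, contradicting $\lambda(G)=2$. Write the two cut edges as $e=x_1x_2$ and $f=y_1y_2$ with $x_1,y_1\in V_1$ and $x_2,y_2\in V_2$, allowing $x_i=y_i$.

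First I would record the parities. Each vertex keeps degree $6$ except the endpoints of the cut edges, so $G[V_i]$ has exactly two odd-degree vertices $x_i,y_i$ when $x_i\ne y_i$ (and is eulerian when $x_i=y_i$), and in every case $|E(G[V_i])|=3|V_i|-1$. Because $|V(G)|$ is odd, exactly one of $|V_1|,|V_2|$ is odd; relabelling if necessary, I assume $|V_1|$ is odd and $|V_2|$ is even. Next I form the reduced graph $G_1=G[V_1]+a_1$, where $a_1$ is a new edge joining $x_1$ and $y_1$ (a loop if $x_1=y_1$). This restores degree $6$ at $x_1$ and $y_1$, so $G_1$ is a connected $6$-regular graph of odd order with fewer vertices than $G$ (possibly the single-vertex bouquet $B_3$, which trivially splits into its three loops). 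By minimality of the counterexample $G$, the graph $G_1$ has a rooted $3$-odd decomposition; let $T$ be the constituent containing $a_1$, and let $r$ be the common root.

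The heart of the argument is to ``expand'' $a_1$ back across the cut. Since $G[V_2]$ is connected with its only odd-degree vertices among $\{x_2,y_2\}$, it carries an eulerian trail from $x_2$ to $y_2$ (a closed one based at $x_2=y_2$ in the degenerate case). I replace the single step $x_1\,a_1\,y_1$ inside $T$ by the excursion $x_1\xrightarrow{e}x_2\rightsquigarrow y_2\xrightarrow{f}y_1$ that runs through this eulerian trail. This uses each edge of $G$ exactly once across all constituents, keeps all three trails based at $r$, and leaves the other two constituents untouched; the only thing to verify is the length of the modified $T$. The excursion has length $2+|E(G[V_2])|=3|V_2|+1$, so replacing the length-$1$ edge $a_1$ changes the length of $T$ by $3|V_2|$, which is even because $|V_2|$ is even. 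Hence $T$ remains odd, and I obtain a rooted $3$-odd decomposition of $G$, contradicting the choice of $G$.

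The main obstacle is precisely this parity bookkeeping: the construction preserves the oddness of $T$ only because the excursion length $3|V_2|+1$ is odd, which in turn forces me to pair the \emph{even} side $V_2$ with the reduction applied to the \emph{odd} side $V_1$. That this pairing is available is exactly what the odd order of $G$ guarantees, and it is what makes a single $2$-edge-cut reducible. The remaining points—selecting the cut so both sides are connected, and handling the degenerate cases $x_i=y_i$ (where $a_1$ becomes a loop or $G[V_2]$ is already eulerian)—are routine once the parity is arranged as above.
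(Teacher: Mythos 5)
Your proof is correct and follows essentially the same route as the paper: both reduce across a $2$-edge-cut by adding an edge $a_1$ joining the two attachment vertices on the odd-order side, invoke minimality to decompose the smaller $6$-regular graph, and then splice the even-order side (together with the two cut edges) back into the constituent containing $a_1$, with the same parity count showing the constituent stays odd. Your extra remarks on choosing the cut with both sides connected and on the degenerate cases are harmless refinements of what the paper does implicitly.
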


\begin{proof}
Let $G$ be a smallest counterexample to Theorem~2. Suppose that
$G$ contains a 2-edge-cut $S$. Then $S$ separates $G$ into two
components $H$ and~$K$. One of them, say $H$, has an odd number
of vertices, and therefore an even number of edges.
Consequently, $K$ has an odd number of edges. Let $S=\{a_1b_1,
a_2b_2\}$ where $\{a_1, a_2\}\subseteq V(H)$ and $\{b_1,
b_2\}\subseteq V(K)$. Note that we do not exclude the
possibility that $a_1=a_2$ or $b_1=b_2$, or both. The graph
$H'=H+a_1a_2$ is a $6$-regular graph of odd order smaller
than~$G$. By the induction hypothesis, $H'$ has a rooted
$3$-odd decomposition $\mathcal{D}=\{H_1,H_2,H_3\}$ with root
at some vertex $v$ of $H$. One of the constituents of
$\mathcal{D}$, say $H_1$, contains the edge $a_1a_2$. By
setting $H_1^+=(H_1-a_1a_2)\cup S\cup K$ we get a rooted
$3$-odd decomposition $\{H_1^+,H_2,H_3\}$ of the entire~$G$
with root at the same vertex~$v$. This contradiction proves
that $G$ is $4$-edge-connected.
\end{proof}

\begin{proposition}\label{prop:4-cuts}
A smallest counterexample to Theorem~2 must be
$6$-edge-connected.
\end{proposition}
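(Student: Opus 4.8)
The plan is to mimic the reduction used in Proposition~\ref{prop:2-cuts}, but now across a $4$-edge-cut, which is considerably more delicate. Let $G$ be a smallest counterexample to Theorem~2. By Proposition~\ref{prop:2-cuts} we already know that $G$ is $4$-edge-connected, and since $G$ is eulerian its edge-connectivity is even; as $G$ is $6$-regular we have $\lambda(G)\in\{4,6\}$, so it suffices to rule out the existence of a $4$-edge-cut. Suppose, for contradiction, that $G$ has a $4$-edge-cut $S=\{a_1b_1,a_2b_2,a_3b_3,a_4b_4\}$ with $\{a_1,a_2,a_3,a_4\}\subseteq V(H)$ and $\{b_1,b_2,b_3,b_4\}\subseteq V(K)$. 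Because $G$ is $4$-edge-connected this is a minimum cut, so both sides $H$ and $K$ are connected. Exactly one side has odd order; relabelling if necessary, assume $H$ has odd order, so that $K$ has even order and, by a degree count, an even number of edges. We permit the $a_i$ (or the $b_i$) to coincide and the cut edges to be parallel; the genuinely degenerate situations, such as $H$ or $K$ being a single vertex, would be disposed of separately and reduce to the base case $B_3$.

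First I would augment the odd side. Choosing one of the three ways $\pi$ of pairing the four attachment vertices $a_1,a_2,a_3,a_4$, form $H'_\pi$ from $H$ by adding the two edges prescribed by $\pi$ (for instance $a_1a_2$ and $a_3a_4$). Each $a_i$ regains the degree it lost to $S$, so $H'_\pi$ is a connected $6$-regular graph of odd order with fewer vertices than $G$, possibly with new loops or parallel edges, which is harmless. By the induction hypothesis $H'_\pi$ has a rooted $3$-odd decomposition $\{H_1,H_2,H_3\}$ with root $v\in V(H)$, and the two added edges $f_1,f_2$ lie in one or two of the constituents.

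The second step is to lift this decomposition back to $G$ by rerouting $f_1$ and $f_2$ through $K$. Concretely, I would replace the edge $f_1=a_1a_2$ by the detour $a_1,b_1,\ldots,b_2,a_2$ that uses the cut edges $a_1b_1$ and $a_2b_2$ together with a $b_1$-$b_2$-trail $T_1$ in $K$, and likewise replace $f_2=a_3a_4$ by a $b_3$-$b_4$-trail $T_2$; for the result to be a decomposition of all of $G$ the trails $T_1,T_2$ must be edge-disjoint and together cover $E(K)$. A parity computation governs whether the constituents stay odd: replacing a single edge by such a detour changes the length of its constituent by $|E(T_i)|+1$, so a constituent remains odd precisely when the trail routed through it has odd length. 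Since $|E(K)|=|E(T_1)|+|E(T_2)|$ is even, the two trail lengths have equal parity; hence when $f_1$ and $f_2$ fall in the \emph{same} constituent the parity is automatically correct, whereas when they fall in \emph{different} constituents we must additionally ensure that both $T_1$ and $T_2$ have odd length.

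The main obstacle is therefore to guarantee a two-trail decomposition of $K$ that simultaneously realises a prescribed pairing of the endpoints $b_1,b_2,b_3,b_4$ and, in the split case, the required parities. That $K$ decomposes into two edge-disjoint trails covering all its edges is supplied by Lemma~\ref{lemma:oddvertices} (the Listing--Lucas theorem), since $K$ has exactly four vertices of odd degree; what that lemma does not control is which pairing of the four ends is realised, nor the individual parities. To force the pairing I would exploit the freedom of the three choices of $\pi$ together with the connectivity of $G$, using Lemma~\ref{lemma:kconn-kfan} to route edge-disjoint connections between the prescribed $b_i$ inside $K$ and absorbing the remaining, necessarily even, eulerian part of $K$ into the constituents without disturbing parity. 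The argument then becomes a case analysis according to how $f_1,f_2$ distribute among the constituents and which pairings are feasible, the aim being to show that at least one choice of $\pi$ yields a rooted $3$-odd decomposition of $G$, contradicting the choice of $G$. Pinning down this pairing-and-parity step, so that at least one of the three pairings is always admissible, is where I expect the real difficulty to lie. Once the $4$-edge-cut is excluded, $\lambda(G)=6$, and $G$ is then ruled out by Corollary~\ref{cor:3-odd_6-conn}.
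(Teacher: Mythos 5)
There is a genuine gap, and you have located it yourself: the ``pairing-and-parity step'' that you defer is precisely the hard content of this proposition, and your set-up makes it harder than it needs to be. In your scheme the induction hypothesis is applied to the odd-order side first, so the pairing $\pi$ and, worse, the distribution of the two new edges $f_1,f_2$ among the three constituents are dictated by a black-box decomposition you do not control. You are then left needing a two-trail decomposition of the other side $K$ that realises a \emph{prescribed} pairing of $b_1,b_2,b_3,b_4$ and, in the split case, has \emph{both} trails of odd length. Neither requirement follows from Lemma~\ref{lemma:oddvertices} (Listing--Lucas gives some decomposition into two open trails, with no control over which pairing is realised), and the parity requirement can genuinely fail: if $K$ is bipartite, the parity of every $b_i$-$b_j$ trail is forced by the bipartition, and the three choices of $\pi$ do not help because you cannot steer whether $f_1$ and $f_2$ end up in the same constituent.

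The paper inverts the order of quantifiers to dissolve exactly this difficulty. It first decomposes the side with an even number of edges by hand into two semi-eulerian subgraphs $H_{k,l}$ and $H_{m,n}$ that are \emph{both odd}, accepting whichever endpoint pairing $\{k,l\},\{m,n\}$ turns out to be achievable (in the bipartite case the bipartition forces $x=2$ and two odd paths $P_{1,3},P_{2,4}$ exist; in the non-bipartite case a maximal odd eulerian subgraph $C$ together with Claim~2, which uses the fan lemma applied to $G/K$ and Lemma~\ref{lemma:oddvertices}, produces them). Only then is the odd-order side augmented by the two edges $b_kb_l$, $b_mb_n$ matching that pairing, and the induction hypothesis invoked. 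Because each substituted piece $H_{k,l}\cup\{a_kb_k,a_lb_l\}$ has odd length plus two, replacing a single edge by it changes every constituent's length by an even number, so the decomposition stays $3$-odd no matter how the two new edges are distributed among the constituents. Your parity bookkeeping for the detours is correct as far as it goes, but without the reordering (or a proof that a prescribed pairing with prescribed parities can always be realised in $K$, which is false in general) the argument does not close.
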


\begin{proof}
Again, let $G$ be a smallest counterexample to Theorem~2. By
Proposition~\ref{prop:2-cuts}, $G$ is $4$-edge-connected.
Suppose that $G$ contains a 4-edge-cut $S$. Then $G-S$ has two
components $H$ and $K$, one of which, say $H$, has an even
number of edges. It follows that $H$ has even order and that
$K$ has odd. Let $S=\{a_1b_1, a_2b_2, a_3b_3, a_4b_4\}$ with
$\{a_1,a_2,a_3,a_4\}\subseteq V(H)$ and
$\{b_1,b_2,b_3,b_4\}\subseteq V(K)$. Note that some vertices
within both $\{a_1,a_2,a_3,a_4\}$ and $\{b_1,b_2,b_3,b_4\}$ may
coincide. Set $A=\{a_1,a_2,a_3,a_4\}$.

To establish the result it suffices to show that $H$ can be
decomposed into two odd semi-eulerian subgraphs, an
$a_k$-$a_l$-eulerian subgraph $H_{k,l}$ and an
$a_m$-$a_n$-eulerian subgraph $H_{m,n}$, where
$\{k,l,m,n\}=\{1,2,3,4\}$. Having such a decomposition of~$H$,
we can construct a rooted $3$-odd decomposition of $G$ as
follows. We add the edges $b_kb_l$ and $b_mb_n$ to $K$ to form
a $6$-regular graph $G'$ of odd order. Since $H$ contains at
least two vertices, the order of $G'$ is smaller than that of
$G$. By the induction hypothesis, $G'$ has a rooted $3$-odd
decomposition $\mathcal{D}=\{K_1,K_2,K_3\}$ with root at some
vertex $v$ of $K$. We replace the edge $b_kb_l$ with $H_{k,l}$
and the edge $b_mb_n$ with $H_{m,n}$ in the corresponding
constituents of $\mathcal{D}$ thereby producing a rooted
$3$-odd decomposition of the entire~$G$ with root at the same
vertex -- a contradiction. What remains is to find the
subgraphs $H_{k,l}$ and $H_{m,n}$. We distinguish two cases
depending on whether $H$ is or is not bipartite.

\medskip
\textbf{Case 1.} The subgraph $H$ is bipartite. Let
$\{V_1,V_2\}$ be the bipartition of $H$. Let $x$ denote the
number of indices $i$ for which $a_i$ belongs to $V_1$;
obviously, the number of indices $i$ for which $a_i$ belongs to
$V_2$ is $4-x$. Since $G$ is bipartite, we have
$6|V_1|-x=6|V_2|-(4-x)$, which implies that $x\equiv2\pmod6$
and therefore $x=2$.

Without loss of generality we may assume that $a_1$ and $a_2$
lie in $V_1$ and $a_3$ and $a_4$ lie in $V_2$. (Note that the
vertices $a_1$ and $a_2$ may coincide as well as $a_3$ and
$a_4$ may coincide.) Since $H$ is connected, there exists an
$a_1$-$a_3$-path $P_{1,3}$ in $H$. Clearly $P_{1,3}$ has odd
length. The graph $H-P_{1,3}$ has precisely two vertices of odd
degree, namely $a_2$ and $a_4$. It follows that $a_2$ and $a_4$
lie in the same component of $H-P_{1,3}$, so there exists an
$a_2$-$a_4$-path $P_{2,4}$ in $H-P_{1,3}$, which again must
have an odd number of edges. The graph $H-(P_{1,3}\cup
P_{2,4})$ is bipartite and has all its vertices of even degree.
It follows that $H-(P_{1,3}\cup P_{2,4})$ can be decomposed
into a collection of even circuits; in particular, each
component of $H-(P_{1,3}\cup P_{2,4})$ has an even number of
edges. Moreover, each of these components is incident with at
least one of the paths $P_{1,3}$ and $P_{2,4}$. Thus we can add
each component of $H-(P_{1,3}\cup P_{2,4})$ to either $P_{1,3}$
or $P_{2,4}$ to produce a decomposition of $H$ into an
$a_1$-$a_3$-eulerian subgraph $H_{1,3}$ and
$a_2$-$a_4$-eulerian subgraph $H_{2,4}$. As the number of edges
in each of $H_{1,3}$ and $H_{2,4}$ is odd,
$\{H_{1,3},H_{2,4}\}$ is the required decomposition of $H$.

\medskip
{\bf Case 2.} The subgraph $H$ is not bipartite. As before, we
wish to construct suitable semi-eulerian subgraphs $H_{k,l}$
and $H_{m,n}$ that decompose $H$. To this end, the following
technical tool will be useful.

\medskip\noindent
Claim 2. \textit{Let $Y$ be an eulerian subgraph of $H$ and let
$B_Y$ be the union of all nontrivial components of $H-Y$ that
contain a vertex of $A$. Then $B_Y$ can be decomposed into two
semi-eulerian subgraphs $B_{k,l}$ and $B_{m,n}$ such that
$B_{k,l}$ is $a_k$-$a_l$-eulerian, $B_{m,n}$ is
$a_m$-$a_n$-eulerian, both intersect $Y$, and
$\{k,l,m,n\}=\{1,2,3,4\}$}

\medskip\noindent
Proof of Claim 2. Consider the graph $G/K$ obtained from $G$ by
contracting $K$ into a single vertex $b$. The contraction
transforms each edge $a_ib_i$ from $S$ into the edge $a_ib$
of~$G/K$. Since $G/K$ is $4$-edge-connected, by
Lemma~\ref{lemma:kconn-kfan} it contains four pairwise
edge-disjoint $b$-$Y$-paths, one through each edge $a_ib$. Take
the path containing the edge $a_ib$ and denote by $P_i$ its
segment starting at $a_i$; let $a_i'$ be the end-vertex of
$P_i$ in $B_Y\cap Y$. Thus $P_1$, $P_2$, $P_3$, and $P_4$ are
four pairwise edge-disjoint $A$-$Y$-paths entirely contained in
$B_Y$. Let $A'=\{a_1',a_2',a_3',a_4'\}$; again, some of these
vertices may coincide.

Consider the subgraph $B'=B_Y-\bigcup_i P_i$. Observe that each
odd-degree vertex of $B'$ lies in~$A'$. By
Lemma~\ref{lemma:oddvertices}, $B'$ contains an
$a_k'$-$a_l'$-path $P_{k,l}$ and an $a_m'$-$a_n'$-path
$P_{m,n}$ such that $P_{k,l}$ and $P_{m,n}$ are edge-disjoint
and $\{k,l,m,n\}=\{1,2,3,4\}$. If some vertex $a_k'\in A'$ has
even degree in $B'$, then there exists $l\ne k$ such that
$a_l'=a_k'$; in this case we can choose the path $P_{k,l}$ to
be trivial. Set $Q_{k,l}=P_k\cup P_{k,l}\cup P_l$ and
$Q_{m,n}=P_m\cup P_{m,n}\cup P_n$. Then $Q_{k,l}$ is an
$a_k$-$a_l$-eulerian subgraph and $Q_{m,n}$ is
$a_m$-$a_n$-eulerian subgraph of $B_Y$, and the subgraph
$B''=H-(Q_{k,l}\cup Q_{m,n})$ has all vertices of even degree.
Each component of $B''$ is eulerian and has at least one vertex
in either $Q_{k,l}$ or $Q_{m,n}$. We extend $Q_{k,l}$ and
$Q_{m,n}$ to subgraphs $B_{k,l}$ and $B_{m,n}$, respectively,
by attaching components of $B''$ to either $Q_{k,l}$ and
$Q_{m,n}$ in such a way that the resulting subgraphs $B_{k,l}$
and $B_{m,n}$ are connected. It is clear from the construction
that $B_{k,l}$ is an $a_k$-$a_l$-eulerian subgraph of $B_Y$ and
$B_{m,n}$ is an $a_m$-$a_n$-eulerian subgraph of $B_Y$. They
intersect $Y$ and form a decomposition of $B_Y$. This
establishes Claim~2.

\medskip\noindent

We continue with the proof of Case 2. Recall that $H$ is now
non-bipartite. It means that $H$ contains an odd circuit, and
hence an eulerian subgraph with an odd number of edges. Let $C$
be one with maximum number of edges. Let $B=B_C$ be the union
of all nontrivial components of $H-C$ that contain a vertex
from $A=\{a_1,a_2,a_3,a_4\}$ and let $D$ be the union of all
other nontrivial components. Note that each of $B$ and $D$ may
be empty, but since $H$ has an even number of edges, $B\cup D$
contains at least one component. Each component of $B\cup D$
has at least one vertex in~$C$ because $H$ is connected.

The maximality of $C$ implies that $B\cup D$ contains at most
one eulerian component, which necessarily has an odd number of
edges. Furthermore, $B$ has at most two components, which may
be either eulerian or semi-eulerian. We consider two subcases
according to whether $B$ has an even or an odd number of edges.

\medskip
\textbf{Subcase 2.1.} The subgraph $B$ has an even number of
edges. Since $C$ is odd, $D$ must be nonempty. We now apply
Claim~2 with $Y=D$. This implies that $B_Y=B\cup C$ and Claim~2
guarantees a decomposition of $B\cup C$ into semi-eulerian
subgraphs $B_{k,l}$ and $B_{m,n}$. Since $B\cup C$ is odd, one
of $B_{k,l}$ and $B_{m,n}$ is odd, say $B_{k,l}$. Furthermore,
$B_{m,n}$ intersects $D$, so $B_{m,n}\cup D$ is connected and
is also odd. Thus we can set $H_{k,l}=B_{k,l}$ and
$H_{m,n}=B_{m,n}\cup D$, which is the required decomposition of
$H$.

\medskip
\textbf{Subcase 2.2.} The subgraph $B$ has an odd number of
edges. Then $D$ must be empty. Let us take $Y=C$ thereby
obtaining $B_Y=B$. By Claim~2, we can decompose $B$ into two
semi-eulerian subgraphs $B_{k,l}$ and $B_{m,n}$ one of which is
odd, say $B_{k,l}$. As $B_{m,n}$ intersects~$C$, we can set
$H_{k,l}=B_{k,l}$ and $H_{m,n}=B_{m,n}\cup C$, which is the
required decomposition of $H$. This establishes Subcase~2.1 and
completes the proof of Proposition~\ref{prop:4-cuts}.
\end{proof}

\section{Concluding remarks}

As mentioned in Introduction, the existence of rooted $3$-odd
decompositions of eulerian graphs is closely related to the
existence of nowhere-zero integer $3$-flows in signed eulerian
graphs. Part (c) of Main Theorem in \cite{euler} states that a
signed eulerian graph admits a nowhere-zero integer $3$-flow
but not a nowhere-zero $2$-flow if and only if it admits a
rooted decomposition into three eulerian subgraphs with an odd
number of negative edges each. This result can be used to
derive a necessary and sufficient condition for an unsigned
eulerian graph to admit a rooted $3$-odd decomposition. For
this purpose, let us define an \textit{undirected nowhere-zero
integer $k$-flow} on a graph $G$ as a mapping $\phi\colon
E(G)\to\mathbb{Z}$ such that for each edge $e$ one has
$|\phi(e)|<k$ and $\phi(e)\ne 0$. The concept of an undirected
integer flow is easily seen to be equivalent to an integer flow
on the signed graph obtained by equipping $G$ with the
all-negative signature. Let us note that under the term
\textit{zero-sum flow} undirected integer flows were studied by
Akbari et al. in \cite{zero-sum1, zero-sum2}.

By using items (b) and (c) of Main Theorem of \cite{euler}
restricted to the all-negative signatures we obtain the
following result.

\begin{theorem}
An eulerian graph admits a rooted $3$-odd decomposition if and
only if it has an odd number of edges and admits an undirected
nowhere-zero integer $3$-flow.
\end{theorem}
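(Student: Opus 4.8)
The plan is to deduce the theorem directly from the cited results in \cite{euler}, using the translation between undirected integer flows and flows on all-negatively signed graphs that the preceding paragraph has already set up. Let $G$ be an eulerian graph and equip it with the all-negative signature to obtain a signed graph $\tilde G$. By the observation recorded above, a rooted $3$-odd decomposition of $G$ (in the unsigned sense) is exactly the same object as a rooted decomposition of $\tilde G$ into three eulerian subgraphs each having an odd number of negative edges, since every edge of $\tilde G$ is negative and ``odd number of negative edges'' therefore means ``odd number of edges.'' So it suffices to characterise when $\tilde G$ admits such a signed decomposition.

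First I would invoke part (c) of the Main Theorem of \cite{euler}: $\tilde G$ admits a rooted decomposition into three eulerian subgraphs with an odd number of negative edges each if and only if $\tilde G$ has a nowhere-zero integer $3$-flow but no nowhere-zero integer $2$-flow. Translating back through the equivalence between integer flows on $\tilde G$ and undirected integer flows on $G$, this says that $G$ has a rooted $3$-odd decomposition if and only if $G$ admits an undirected nowhere-zero integer $3$-flow but no undirected nowhere-zero integer $2$-flow. The remaining task is to show that, for an eulerian graph, the condition ``no undirected nowhere-zero $2$-flow'' is equivalent to ``$G$ has an odd number of edges,'' which is the form in which the theorem is stated.

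To handle this last equivalence I would bring in part (b) of the Main Theorem of \cite{euler}, again restricted to the all-negative signature, which the paper explicitly says it is using. An undirected nowhere-zero integer $2$-flow assigns each edge a value in $\{-1,+1\}$ with the flow condition holding at every vertex; on an all-negative signed graph this is precisely the existence of a $2$-flow in the signed sense, and part (b) should characterise this by a balance/parity condition. Concretely, a nowhere-zero $2$-flow on $\tilde G$ forces the number of negative edges to be even (switching preserves this parity, and a $2$-flow exists only when the signed graph is even), so a $2$-flow exists exactly when $|E(G)|$ is even. Hence ``no undirected $2$-flow'' is equivalent to $|E(G)|$ being odd. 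Combining this with the previous paragraph yields: $G$ has a rooted $3$-odd decomposition if and only if $|E(G)|$ is odd and $G$ admits an undirected nowhere-zero integer $3$-flow, which is exactly the claimed statement.

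The main obstacle I anticipate is not the logical skeleton, which is a short chain of equivalences, but pinning down the precise content of items (b) and (c) of the external Main Theorem and confirming that the ``rooted decomposition into three eulerian subgraphs with an odd number of negative edges each'' in \cite{euler} matches verbatim the notion of rooted $3$-odd decomposition used here once the all-negative signature is imposed. In particular I would want to verify carefully that the absence of a $2$-flow is genuinely equivalent to the edge-count parity condition rather than to some subtler balance requirement, since the whole reduction to ``odd number of edges'' hinges on that single parity fact being the exact obstruction to a $2$-flow on an all-negative eulerian graph.
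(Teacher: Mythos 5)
Your proposal follows essentially the same route as the paper, which derives the theorem by citing items (b) and (c) of the Main Theorem of \cite{euler} restricted to the all-negative signature; your additional verification that an all-negative eulerian signed graph admits a nowhere-zero $2$-flow exactly when $|E(G)|$ is even (via the even/odd classification of signed eulerian graphs, or directly by alternating signs along an eulerian circuit) correctly fills in the one parity step the paper leaves implicit. No gaps.
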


\subsection*{Acknowledgmets}
Research reported in this paper was supported by the following
grants. The first author was partially supported by VEGA
1/0474/15 and the second author was partially supported by VEGA
1/0876/16. Both authors were also supported from APVV-0223-10
and APVV-15-0220.

\end{document}